\numberwithin{equation}{section}
\newtheorem{Theorem}{Theorem}[section]
\newtheorem{Corollary}[Theorem]{Corollary}
\newtheorem{Lemma}[Theorem]{Lemma}
\newtheorem{Proposition}[Theorem]{Proposition}
{ \theoremstyle{definition}
\newtheorem{Definition}[Theorem]{Definition}
\newtheorem{Example}[Theorem]{Example}
\newtheorem{Remark}[Theorem]{Remark} }
\begin{document}
\allowdisplaybreaks

\newcommand{\arXivNumber}{2003.14104}

\renewcommand{\thefootnote}{}

\renewcommand{\PaperNumber}{092}

\FirstPageHeading

\ShortArticleName{On the Generalized Cluster Algebras of Geometric Type}

\ArticleName{On the Generalized Cluster Algebras\\ of Geometric Type\footnote{This paper is a~contribution to the Special Issue on Cluster Algebras. The full collection is available at \href{https://www.emis.de/journals/SIGMA/cluster-algebras.html}{https://www.emis.de/journals/SIGMA/cluster-algebras.html}}}

\Author{Liqian BAI~$^{\dag^1}$, Xueqing CHEN~$^{\dag^2}$, Ming DING~$^{\dag^3}$ and Fan XU~$^{\dag^4}$}

\AuthorNameForHeading{L.~Bai, X.~Chen, M.~Ding and F.~Xu}

\Address{$^{\dag^1}$~School of Mathematics and Statistics, Northwestern Polytechnical University,\\
\hphantom{$^{\dag^1}$}~Xi'an, Shaanxi 710072, P.~R.~China}
\EmailDD{\href{mailto:bailiqian@nwpu.edu.cn}{bailiqian@nwpu.edu.cn}}

\Address{$^{\dag^2}$~Department of Mathematics, University of Wisconsin-Whitewater,\\
\hphantom{$^{\dag^2}$}~800 West Main Street, Whitewater, WI~53190, USA}
\EmailDD{\href{mailto:chenx@uww.edu}{chenx@uww.edu}}

\Address{$^{\dag^3}$~School of Mathematics and Information Science, Guangzhou University,\\
\hphantom{$^{\dag^3}$}~Guangzhou 510006, P.~R.~China}
\EmailDD{\href{mailto:m-ding04@mails.tsinghua.edu.cn}{m-ding04@mails.tsinghua.edu.cn}}

\Address{$^{\dag^4}$~Department of Mathematical Sciences, Tsinghua University, Beijing 100084, P.~R.~China}
\EmailDD{\href{mailto:fanxu@mail.tsinghua.edu.cn}{fanxu@mail.tsinghua.edu.cn}}

\ArticleDates{Received April 01, 2020, in final form September 14, 2020; Published online September 28, 2020}

\Abstract{We develop and prove the analogs of~some results shown in [Berenstein~A., Fomin~S., Zelevinsky~A., \textit{Duke Math.~J.} \textbf{126} (2005), 1--52] concerning lower and upper bounds of~cluster algebras to the generalized cluster algebras of~geometric type. We show that lower bounds coincide with upper bounds under the conditions of~acyclicity and coprimality. Consequently, we obtain the standard monomial bases of~these generalized cluster algebras. Moreover, in the appendix, we prove that an~acyclic generalized cluster algebra is~equal to the corresponding generalized upper cluster algebra without the assumption of~the existence of~coprimality.}

\Keywords{cluster algebra; generalized cluster algebra; lower bound; upper bound; standard monomial}

\Classification{13F60; 05E16}

\begin{flushright}
\begin{minipage}{65mm}
\it Dedicated to our teacher Jie Xiao\\ on the occasion of his sixtieth birthday
\end{minipage}
\end{flushright}

\renewcommand{\thefootnote}{\arabic{footnote}}
\setcounter{footnote}{0}

\section{Background}
Fomin and Zelevinsky invented the concept of~cluster algebras \cite{ca1,ca2} in order
to create an~algebraic framework for studying total positivity in algebraic groups and canonical bases in quantum groups. As a natural generalization,
Chekhov and Shapiro introduced the generalized cluster algebras which arise from the Teichm\"{u}ller spaces of~Riemann surfaces with orbifold points \cite{CS}.
The main difference between cluster algebras and generalized cluster algebras is that the binomial exchange relations for cluster
variables of~cluster algebras are replaced by the multinomial exchange relations for those cluster
variables of~generalized cluster algebras. In \cite{CS}, Chekhov and Shapiro have shown that the generalized cluster algebras possess the remarkable Laurent phenomenon. Many other important properties of~cluster algebras are also shown to hold in the generalized cluster algebras such as finite type classification, $g$-vectors and $F$-polynomials~\cite{CS, nak2}.

Motivated by the Laurent phenomenon established in \cite{ca1}, Berenstein, Fomin and Zelevinsky in \cite{bfz} introduced the notion of~an~upper cluster algebra, which is a certain (maybe infinitely many) intersection of~Laurent polynomial rings.
The upper bound is a certain finite intersection of~rings of~Laurent polynomials.
The lower bound is the subalgebra of~a cluster algebra generated by specific finitely many cluster variables.
These four algebras satisfy the following relations:
\begin{gather*}
 \text{lower bound}\subseteq \text{cluster algebra}\subseteq \text{upper cluster algebra}\subseteq \text{upper bound}.
\end{gather*}
They proved that for a cluster algebra possessing an~acyclic and coprime seed, its lower bound coincides with its upper bound, thus, all four algebras coincide. The standard monomial bases of~these kinds of~cluster algebras can then be naturally constructed.
Beyond acyclic cluster algebras, Muller~\cite{muller13} introduced the notion of~locally acyclic cluster algebras, and it turns out that locally acyclic cluster algebras coincide with their upper cluster algebras~\cite{muller13, muller14}. Bucher, Machacek and Shapiro~\cite{BMS} discussed how the choice of~the ground ring impacts whether a cluster algebra is equal to its upper cluster algebra.

From a geometric point of~view, upper cluster algebras are more natural than cluster algebras~\cite{BMRS, GHK1}. In~\cite{MM15}, Matherne and Muller provided techniques for producing explicit presentations of~upper cluster algebras. Plamondon~\cite{P} used quiver representations to obtain a formula for certain elements of~skew-symmetric upper cluster algebras. Lee, Li and Mills~\cite{LLM} developed an~elementary formula for certain non-trivial elements of~an~upper cluster algebra with positive coefficients, and proved that these elements form a basis of~an~acyclic cluster algebra.

Gekhtman, Shapiro and Vainshtein~\cite{gsv18} proved that generalized upper cluster algebras over certain rings retain all properties of~ordinary upper cluster algebras, and under certain copri\-mality conditions coincide with the intersection of~rings of~Laurent polynomials in a finite collection of~clusters.

The aim of~this paper is to continue the investigation of~the structure of~generalized cluster algebras. By using the methods developed in \cite{bfz}, we prove that the conditions of~acyclicity and coprimality close the gap between lower bounds and upper bounds associated to generalized cluster algebras as the extension of~the similar results of~ordinary cluster algebras. Consequently, we obtain the standard monomial bases of~these generalized cluster algebras. It would be desi\-rable to apply the results that we obtain in this paper to construct some good bases of~the corresponding generalized cluster algebras in the future work.

In the appendix, we extend Muller's results on acyclic and locally acyclic cluster algebras to~acyclic generalized cluster algebras. By using the same arguments as given in~\cite{muller14}, we prove that acyclic generalized cluster algebras coincide with their generalized upper cluster algebras without the assumption of~the existence of~coprimality.

\section{Preliminaries}

First, let us recall the definition of~the generalized cluster algebras of~geometric type (see \cite{CS, gsv18}).

In the following, we use $[i,j]$ to denote the set $\{i,i+1,\ldots,j-1,j\}$ for integers $i<j$. Let~$m$ and $n$ be positive integers with $m\geq n$.
An $n\times n$ matrix $B$ is called skew-symmetrizable
if there exists a diagonal matrix $D=\mathop{\rm diag}\big(\widetilde{d}_1,\widetilde{d}_2,\ldots,\widetilde{d}_n\big)$, where $\widetilde{d}_i$ are positive integers for all $i\in[1,n]$, such that $DB$ is skew-symmetric.
Let~${\rm\mathbf{ex}}$ be a subset of~$[1,m]$ with $|{\rm\mathbf{ex}}|=n$.
Let~$\widetilde{B}=(b_{ij})$ be an~$m\times n$ integer matrix such that $\widetilde{B}$ has the $n\times n$ skew-symmetrizable submatrix $B$ with rows labeled by ${\rm\mathbf{ex}}$.
The columns of~$\widetilde{B}$ are also labeled by $\mathbf{ex}$.
The matrix $B$ is called the principal part of~$\widetilde{B}$.
For each $s \in \mathbf{ex}$, let $d_s$ be a positive integer such that $d_s$ divides $b_{is}$ for any $i\in[1,m]$, and let $\beta_{is}:=\frac{b_{is}}{d_s}$.

Let~us denote by $\mathbb{Q}(x_{1},x_2,\ldots,x_{m})$ the function field of~$m$ variables over $\mathbb{Q}$ with a transcendence basis $\{x_1,x_2,\ldots,x_m\}$. Let~the coefficient group $\mathbb{P}$ be the multiplicative free abelian group generated by $\{x_{i}\,|\, i\in[1,m]-{\rm\mathbf{ex}} \}$ and $\mathop{\mathbb{ZP}}$ be its integer group ring.
For each $i\in{\rm\mathbf{ex}}$, we will denote by $\rho_i:=\{\rho_{i,0},\ldots,\rho_{i,d_i}\}$ the $i$th string, where $\rho_{i,0}=\rho_{i,d_i}=1$ and $\rho_{i,j}$ are monomials in~$\mathbb{Z} [x_{i}\,|\, i\in{[1,m]-\rm\mathbf{ex}}]$ for $1\leq j\leq d_i-1$.

\begin{Definition}\label{def of generalized seed}
A~generalized seed (of geometric type) is a triple $\big(\widetilde{\mathbf{x}},\rho,\widetilde{B}\big)$, where
\begin{itemize}\itemsep=0pt
 \item[(1)] the set $\widetilde{\mathbf{x}}=\{x_{1},x_2,\ldots,x_{m}\}$ is~called the extended cluster and the set $\mathbf{x}=\{x_{i}\,|\, i\in{\rm\mathbf{ex}}\}$ is~called the cluster
 whose elements are called cluster variables, and the elements of~$\{x_{i}\,|\, i\in[1,m]-{\rm\mathbf{ex}}\}$ are called frozen variables;
 \item[(2)] $\rho=\{\rho_{i}\,|\, i\in{\rm\mathbf{ex}}\}$ is the set of~strings;
 \item[(3)] the matrices $\widetilde{B}$ and $B$ are called the extended exchange matrix and the exchange matrix, respectively.
\end{itemize}
\end{Definition}

Define the function $[x]_+:=x$ if $x\geq0$, and $[x]_+:=0$ otherwise.

\begin{Definition}
For $i\in{\rm\mathbf{ex}}$, the mutation of~a generalized seed $\big(\widetilde{\mathbf{x}},\rho,\widetilde{B}\big)$ in direction $i$ is another generalized seed \smash{$\mu_i\big(\widetilde{\mathbf{x}},\rho,\widetilde{B}\big):=\big(\widetilde{\mathbf{x}}',
\rho',\widetilde{B}'\big)$}, where
\begin{itemize}\itemsep=0pt
\item[(1)]the set $\widetilde{\mathbf{x}}':=(\widetilde{\mathbf{x}}- \{x_i\})\cup\{x'_{i}\}$ with
 \begin{gather*}
 x_{i}':=\mu_i(x_i)=x_i^{-1}\left(\sum\limits_{r=0}^{d_i}\rho_{i,r}\prod\limits_{j=1}^{m} x_{j}^{r[\beta_{ji}]_+ +(d_i-r)[-\beta_{ji}]_+}\right)\!,
 \end{gather*}
 which is called the exchange relation;
\item[(2)]$\rho' :=\mu_i(\rho)=\big\{\rho'_{i},\rho_{j}\,|\, j\in{\rm\mathbf{ex}}-\{i\}\big \}$, where $\rho_{i}'=\{\rho'_{i,0},\ldots,\rho'_{i,d_i}\}$ such that $\rho'_{i,r}=\rho_{i,d_i-r}$ for~$0\leq r\leq d_i$;

\item[(3)]
the matrix $\widetilde{B}':=\mu_i\big(\widetilde{B}\big)$ is defined by
\begin{gather*}
b'_{kl}=
 \begin{cases}
 -b_{kl} &\text{if}\quad k=i\quad \text{or}\quad l=i,
 \\
b_{kl}+([b_{il}]_+b_{ki}+ b_{il}[-b_{ki}]_+)&\text{otherwise}.
 \end{cases}
\end{gather*}

\end{itemize}
\end{Definition}

Note that $\mu_i$ is an~involution. The generalized seed $\big(\widetilde{\mathbf{y}},\varepsilon,\widetilde{A}\big)$ is said to be mutation-equivalent to \smash{$\big(\widetilde{\mathbf{x}},\rho,\widetilde{B}\big)$},
if $\smash{\big(\widetilde{\mathbf{y}},\varepsilon,\widetilde{A}\big)}$ can be obtained from
$\smash{\big(\widetilde{\mathbf{x}},\rho,\widetilde{B}\big)}$ by a sequence of~seed mutations. If $d_r=1$ for all $r \in {\rm\mathbf{ex}}$, then one recovers the ordinary cluster algebras.

\begin{Example}[{\cite[Theorem 2.7]{CS}\label{example1}}]
Let~$\mathbf{\widetilde{x}}=\mathbf{x}=\{x_1,x_2\}$ and
$\widetilde{B}=B=
 \left(\begin{smallmatrix} \phantom{-}0 & 1 \\ -2 & 0 \end{smallmatrix}\right)$.
Let~$(d_1,d_2)=(2,1)$ and $\rho=\{\rho_1,\rho_2\}$, where $\rho_1=\{1,h,1\}$ for any $h\in\mathbb{Z}$ and $\rho_2=\{1,1\}$.
The triple $(\mathbf{x},\rho,B)$ is a generalized seed. Let~$B'=
 \left(\begin{smallmatrix} 0 & -1 \\ 2 & \phantom{-}0 \end{smallmatrix}\right)$.
Then we have
\begin{gather*}
 \cdots\stackrel{\mu_1}\longleftrightarrow(\{x_1,x_0\},\rho,B')
\stackrel{\mu_2}\longleftrightarrow (\{x_1,x_2\},\rho,B)
\stackrel{\mu_1}\longleftrightarrow(\{x_3,x_2\},\rho,B')
\\ \phantom{\cdots}
 {}\stackrel{\mu_2}\longleftrightarrow (\{x_3,x_4\},\rho,B)
\stackrel{\mu_1}\longleftrightarrow(\{x_5,x_4\},\rho,B')
\stackrel{\mu_2}\longleftrightarrow (\{x_5,x_6\},\rho,B)
\\ \phantom{\cdots}
 {} \stackrel{\mu_1}\longleftrightarrow(\{x_7,x_6\},\rho,B')
\stackrel{\mu_2}\longleftrightarrow (\{x_7,x_8\},\rho,B)\stackrel{\mu_1}\longleftrightarrow\cdots,
\end{gather*}
where the cluster variables $x_k$ for $k\in \mathbb{Z}$ satisfy the exchange relations:
\begin{gather*}
 x_{k-1}x_{k+1}=
 \begin{cases}
 1+x_k &\text{if}\quad k \ \text{is odd},
 \\
 1+hx_k+x_{k}^{2}&\text{if}\quad k\ \text{is even}.
 \end{cases}
\end{gather*}
By a direct calculation, we have that
\begin{gather*}
 x_3=x^{-1}_1+hx^{-1}_1x^{}_2+x^{-1}_1x^{2}_2,\\
 x_4= x^{-1}_2+x^{-1}_1x^{-1}_2+hx^{-1}_1+x^{-1}_1x^{}_2,\\
 x_5=x^{-1}_1+hx^{-1}_1x^{-1}_2+hx^{-1}_2+x^{-1}_1x^{-2}_2+2x^{-2}_2+x^{}_1x^{-2}_2,\\
 x_6=x_0=x^{-1}_2+x^{}_1x^{-1}_2, \qquad
 x_7=x_1, \qquad x_8=x_2.
\end{gather*}
It is now clear that the sequence of~cluster variables $\{x_k\}_{k\in\mathbb{Z}}$ is $6$-periodic. Thus we have only six distinct cluster variables.
\end{Example}

\sloppy\begin{Definition}\label{def of gca}
For an~initial generalized seed $\big(\widetilde{\mathbf{x}}, \rho, \widetilde{B}\big)$, the generalized cluster algebra $\smash{\mathcal{A}\big(\widetilde{\mathbf{x}}, \rho, \widetilde{B}\big)}$ is the $\mathop{\mathbb{ZP}}$-subalgebra of~$\mathbb{Q}(x_1,\ldots,x_m)$ generated by all cluster variables from all gene\-ralized seeds which are mutation-equivalent to $\smash{\big(\widetilde{\mathbf{x}},\rho,\widetilde{B}\big)}$. The integer $n$ is the rank of~$\smash{\mathcal{A}\big(\widetilde{\mathbf{x}}, \rho, \widetilde{B}\big)}$.
\end{Definition}

For each $i\in{\rm\mathbf{ex}}$, we define
\begin{gather*}
 P_i:=x_{i}x'_{i}=\sum\limits_{r=0}^{d_i}\rho_{i,r}\prod\limits_{j=1}^{m} x_{j}^{r[\beta_{ji}]_+ +(d_i-r)[-\beta_{ji}]_+}.
\end{gather*}
It follows that $P_i\in\mathop{\mathbb{ZP}}[x_1,\ldots,x_{i-1},x_{i+1},\ldots,x_n]$.

\begin{Definition}
The generalized seed $\big(\widetilde{\mathbf{x}},\rho,\widetilde{B}\big)$ is called coprime if $P_i$ and $P_j$ are coprime for~any two different $i$, $j\in{\rm\mathbf{ex}}$.
\end{Definition}

Let~$\big(\widetilde{\mathbf{x}},\rho,\widetilde{B}\big)$ be a generalized seed. The directed graph $\Gamma\big(\widetilde{\mathbf{x}},\rho,\widetilde{B}\big)$ is defined as follows:
\begin{itemize}\itemsep=0pt
 \item[(1)] its vertices consist of~all $i\in{\rm\mathbf{ex}}$;
 \item[(2)] a pair $(i,j)$ is a directed edge of~$\Gamma\big(\widetilde{\mathbf{x}},\rho,\widetilde{B}\big)$ if and only if $b_{ij}>0$.
\end{itemize}

\begin{Definition}
The generalized seed $\big(\widetilde{\mathbf{x}},\rho,\widetilde{B}\big)$ is called acyclic if $\Gamma\big(\widetilde{\mathbf{x}},\rho,\widetilde{B}\big)$ does not contain any oriented cycle.
A~gene\-ra\-li\-zed cluster algebra $\mathcal{A}$ is called acyclic if it has an~acyclic gene\-ra\-li\-zed~seed.
\end{Definition}

The following definition is a natural generalization of~\cite[Definition~1.15]{bfz}.
\begin{Definition}
Let~$\big(\widetilde{\mathbf{x}},\rho,\widetilde{B}\big)$ be a generalized seed. A~standard monomial in $\{x_i,x'_i\,|\, i\in{\rm\mathbf{ex}}\}$ is a monomial that does not have any factor of~the form $x_ix'_i$ for any $i\in{\rm\mathbf{ex}}$.
\end{Definition}

In order to define the upper bounds and lower bounds, we write ${\rm\mathbf{ex}}=\{i_1,\ldots,i_n\}$.
\begin{Definition}
For a generalized seed $\big(\widetilde{\mathbf{x}},\rho,\widetilde{B}\big)$, the upper bound is defined by
\begin{gather*}
\mathcal{U}(\widetilde{\mathbf{x}}, \rho, \widetilde{B}):= \mathop{\mathbb{ZP}}\!\big[x_{i_1}^{\pm1},\ldots,x_{i_n}^{\pm1}\big]\cap
\bigcap\limits_{k=1}^{n}\mathop{\mathbb{ZP}}\!\big[x_{i_1}^{\pm1},\ldots,x_{i_{k-1}}^{\pm1},(x'_{i_k})^{\pm1},x_{i_{k+1}}^{\pm1}, \ldots,x_{i_n}^{\pm1}\big]
\end{gather*}
and the lower bound by
\begin{gather*}
\mathcal{L}\big(\widetilde{\mathbf{x}},\rho,\widetilde{B}\big):= \mathop{\mathbb{ZP}}[x_{i_1},x'_{i_1},\ldots,x_{i_n},x'_{i_n}].
\end{gather*}
\end{Definition}

Note that
\begin{gather*}
 \mathcal{L}\big(\widetilde{\mathbf{x}},\rho,\widetilde{B}\big)\subseteq \mathcal{A}\big(\widetilde{\mathbf{x}},\rho,\widetilde{B}\big) \subseteq \mathcal{U}\big(\widetilde{\mathbf{x}},\rho,\widetilde{B}\big).
\end{gather*}

\begin{Theorem}[{\cite[Theorem 4.1]{gsv18}}]
Let~$i\in{\rm\mathbf{ex}}$. If the generalized seeds $\big(\widetilde{\mathbf{x}},\rho,\widetilde{B}\big)$ and $\mu_i\big(\widetilde{\mathbf{x}},\rho,\widetilde{B}\big)$ are coprime, then we have
\begin{gather*}
\mathcal{U}\big(\widetilde{\mathbf{x}},\rho,\widetilde{B}\big) =\mathcal{U}\big(\mu_i\big(\widetilde{\mathbf{x}},\rho,\widetilde{B}\big)\big).
\end{gather*}
\end{Theorem}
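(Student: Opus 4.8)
The plan is to follow, step by step, the proof that ordinary upper bounds are invariant under a single mutation (\cite[Theorem~1.5]{bfz}), watching where the multinomial exchange relations intervene. After relabelling $\mathbf{ex}$ I may assume $i=i_1$. Write $\Sigma=\bigl(\widetilde{\mathbf{x}},\rho,\widetilde B\bigr)$ and $\Sigma'=\mu_{i_1}(\Sigma)$, set $\mathcal L_0=\mathbb{ZP}\bigl[x_{i_1}^{\pm1},\dots,x_{i_n}^{\pm1}\bigr]$, and for $1\le k\le n$ let $\mathcal L_k$ (resp.\ $\mathcal L_k'$) be the Laurent ring occurring in $\mathcal U(\Sigma)$ (resp.\ in $\mathcal U(\Sigma')$) in which the $k$-th cluster variable has been replaced by its mutation. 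Because $\mu_{i_1}$ is an involution one has $\mathcal L_0'=\mathcal L_1$ and $\mathcal L_1'=\mathcal L_0$; hence $\mathcal U(\Sigma)$ and $\mathcal U(\Sigma')$ share the subring $\mathcal L_0\cap\mathcal L_1$, and only the remaining $n-1$ factors must be compared. A purely formal observation --- if $A$ is a subring with $A\cap\mathcal L_k=A\cap\mathcal L_k'$ for each $k\neq1$, then $A\cap\bigcap_{k\neq1}\mathcal L_k=A\cap\bigcap_{k\neq1}\mathcal L_k'$, obtained by replacing the factors one at a time and intersecting the hypothesis with the others --- reduces the theorem, with $A=\mathcal L_0\cap\mathcal L_1$, to the single identity
\begin{gather*}
\mathcal L_0\cap\mathcal L_1\cap\mathcal L_k=\mathcal L_0\cap\mathcal L_1\cap\mathcal L_k'\qquad(k\neq1).
\end{gather*}
By the symmetry $\Sigma\leftrightarrow\Sigma'$, which interchanges $\mathcal L_0\leftrightarrow\mathcal L_1$ and $\mathcal L_k\leftrightarrow\mathcal L_k'$, it suffices to prove one inclusion; and by treating the cluster variables other than $x_{i_1},x_{i_k}$ as additional frozen variables one may assume the rank is $2$.

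The core is a divisibility description of the rings involved. Since $x'_{i_1}=P_{i_1}/x_{i_1}$ with $P_{i_1}$ a polynomial in the remaining cluster variables whose extreme coefficients are the units $\rho_{i_1,0}=\rho_{i_1,d_{i_1}}=1$ (so $P_{i_1}$ is coprime to $x_{i_1}$), one shows that an element $y=\sum_p c_p\,x_{i_1}^{\,p}\in\mathcal L_0$, with the $c_p$ Laurent in the other variables, lies in $\mathcal L_1$ if and only if $P_{i_1}^{[-p]_+}\mid c_p$ for all $p$. Consequently, for $y\in\mathcal L_0\cap\mathcal L_1$, expanding $y$ in powers of $x_{i_k}$ --- over $\mathbb{ZP}\bigl[x_{i_1}^{\pm1},\dots\bigr]$ in one case, over $\mathbb{ZP}\bigl[(x'_{i_1})^{\pm1},\dots\bigr]$ in the other --- membership in $\mathcal L_k$ (resp.\ $\mathcal L_k'$) translates into divisibility of the coefficient of $x_{i_k}^{\,q}$ by $P_{i_k}^{[-q]_+}$ (resp.\ by the direction-$i_k$ exchange polynomial $P_{i_k}'$ of $\Sigma'$). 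It then remains to prove that these two divisibility conditions on $y$ coincide; the bridge is the explicit relation between $P_{i_k}$ and $P_{i_k}'$ coming from the matrix- and string-mutation rules, together with the coprimality of $\Sigma$ and of $\Sigma'$ --- i.e.\ that $P_{i_1}$ is coprime to both $P_{i_k}$ and $P_{i_k}'$ --- which is exactly what lets the factors of $P_{i_1}$ introduced by the substitution $x_{i_1}\leftrightarrow x'_{i_1}$ be cancelled from, and reintroduced into, the divisibility statements without changing their validity.

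The step I expect to be the main obstacle is making this last bookkeeping precise once the classical binomials are replaced by the multinomials $\sum_{r=0}^{d_i}\rho_{i,r}\prod_j x_j^{\,r[\beta_{ji}]_++(d_i-r)[-\beta_{ji}]_+}$: one must pin down the relation between $P_{i_k}$ and $P_{i_k}'$ --- which in the generalized setting is of reciprocal-polynomial type (up to a Laurent monomial and a power of $P_{i_1}$) rather than a clean monomial multiple --- and confirm that the coprimality of $\Sigma$ and of $\Sigma'$ is still precisely the hypothesis under which the divisibility transfer is reversible in both directions. Granting this, the remainder is a direct transcription of the proof of \cite[Theorem~1.5]{bfz}, and the chain of reductions above then yields $\mathcal U(\Sigma)=\mathcal U(\Sigma')$.
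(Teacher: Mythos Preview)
The paper does not actually prove this theorem: it is quoted verbatim as \cite[Theorem~4.1]{gsv18} and no argument is given, so there is no ``paper's own proof'' to compare against. Your outline is the standard one --- it follows the proof of \cite[Theorem~1.5]{bfz} and is, in essence, what \cite{gsv18} does as well --- and the reductions you describe (to rank~$2$, to a single inclusion, to divisibility conditions on the coefficients in the $x_{i_1}$- and $x_{i_k}$-expansions) are correct.

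That said, your proposal is explicitly a sketch: you flag the relation between $P_{i_k}$ and $P_{i_k}'$ and the role of coprimality as ``the step I expect to be the main obstacle'' and write ``Granting this, the remainder is\dots''. This is indeed where the work lies. Concretely, after substituting $x_{i_1}=P_{i_1}/x'_{i_1}$ into $P_{i_k}$ and clearing denominators one must show that the result equals a monomial in $x'_{i_1}$ (and frozen variables) times a power of $P_{i_1}$ times $P_{i_k}'$, with the string $\rho_{i_k}$ unchanged; this is where the sign-coherence of the column of $\widetilde B$ indexed by $i_k$ after mutation (encoded in the mutation rule for $\widetilde B$) and the normalization $\rho_{i_k,0}=\rho_{i_k,d_{i_k}}=1$ are used. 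Once that identity is established, the coprimality of $P_{i_1}$ with $P_{i_k}$ and with $P_{i_k}'$ lets you strip and reinstate the $P_{i_1}$-factors exactly as in \cite{bfz}. Until you write that identity down and verify it term by term, the argument is not complete; but the route is the right one.
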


The following definition is a generalization of~\cite[Section~3.1]{muller14}.
\begin{Definition}\label{def of freezing}
Let~$\big(\widetilde{\mathbf{x}},\rho,\widetilde{B}\big)$ be a generalized seed and $i\in{\rm\mathbf{ex}}$. A~new generalized seed $\big(\widetilde{\mathbf{x}}^{\dag},\rho^{\dag},\widetilde{B}^{\dag}\big)$ is defined as follows:
\begin{itemize}\itemsep=0pt
 \item[(1)] let $\mathop{\mathbb{ZP}}\nolimits^{\dag}=\mathop{\mathbb{ZP}}\big[x^{\pm1}_i\big]$, $\mathbf{x}^{\dag}=\big\{ x_k\,|\, k\in{\rm\mathbf{ex}} -\{i\} \big\}$ and $\widetilde{\mathbf{x}}^{\dag}- \mathbf{x}^{\dag}=(\mathbf{\widetilde{x}}- \mathbf{x})\cup\{x_i\}$;
 \item[(2)] the matrix $\widetilde{B}^{\dag}$ is obtained from $\widetilde{B}$ by deleting the $i$th column, the principal part $B^{\dag}$ is the submatrix of~$\widetilde{B}^{\dag}$ with rows labeled by ${\rm\mathbf{ex}}-\{i\}$;
 \item[(3)] let $\rho^{\dag}=\rho-\{\rho_i\}$.
\end{itemize}
The generalized seed\,$\big(\widetilde{\mathbf{x}}^{\dag},\rho^{\dag},\widetilde{B}^{\dag}\big)$ is called the freezing of\,$\big(\widetilde{\mathbf{x}},\rho,\widetilde{B}\big)$\,at~$x_i$.
The freezing of\,$\mathcal{A}\big(\widetilde{\mathbf{x}},\rho,\widetilde{B}\big)$ at $x_i$ is defined to be the generalized cluster algebra $\smash{\mathcal{A}\big(\widetilde{\mathbf{x}}^{\dag},\rho^{\dag},\widetilde{B}^{\dag}\big)}$, which is the $\mathop{\mathbb{ZP}}\nolimits^{\dag}$-subalgebra of~$\mathbb{Q}(x_1,\ldots,x_{m})$ generated by all cluster variables from the generalized seeds which are mutation-equivalent to $\smash{\big(\widetilde{\mathbf{x}}^{\dag},\rho^{\dag},\widetilde{B}^{\dag}\big)}$.

\end{Definition}

\begin{Example}
Let~$(\mathbf{x},\rho,B)$ be the generalized seed from Example~\ref{example1}. Let~$\mathbf{x}^{\dag}=\{x_2\}$, $\rho^\dag=\{\rho_2\}$ and $B^\dag=(0)$. Then the generalized seed $\big(\mathbf{x}^{\dag},\rho^\dag,B^\dag\big)$ is the freezing of~$(\mathbf{x},\rho,B)$ at $x_1$. It follows that the generalized cluster algebra
$\mathcal{A}\big(\mathbf{x}^{\dag},\rho^\dag,B^\dag\big)=\mathbb{Z}\big[x^{\pm1}_1,x_2, \frac{x_1+1}{x_2}\big]$ is the freezing of~$\mathcal{A}(\mathbf{x},\rho,B)$ at $x_1$.
Similarly, the generalized seed $\big(\{x_1\},\{\rho_1\},(0)\big)$ is the freezing of~$(\mathbf{x},\rho,B)$ at $x_2$ and the freezing of~$\mathcal{A}(\mathbf{x},\rho,B)$ at $x_2$ is $\mathcal{A}\big(\{x_1\},\{\rho_1\},(0)\big)=\mathbb{Z}\big[x_1,x^{\pm1}_2,\frac{1+hx_2+x^{2}_2}{x_1}\big]$.
\end{Example}

The freezing at $x_i$ is compatible with the mutation in direction $j$ for $i\neq j$, therefore we have the following result.

\begin{Lemma}
Let~$i$, $j\in{\rm\mathbf{ex}}$ be distinct and assume $\mu_j\big(\widetilde{\mathbf{x}},\rho,\widetilde{B}\big)
=\big(\widetilde{\mathbf{y}},\varepsilon,\widetilde{A}\big)$. If $\big(\widetilde{\mathbf{x}}^{\dag},\rho^{\dag},\widetilde{B}^{\dag}\big)$ is the freezing of~$\smash{\big(\widetilde{\mathbf{x}},\rho,\widetilde{B}\big)}$ at $x_i$ and $\smash{\big(\widetilde{\mathbf{y}}^{\dag},\varepsilon^{\dag},\widetilde{A}^{\dag}\big)}$ the freezing of~$\smash{\big(\widetilde{\mathbf{y}},\varepsilon,\widetilde{A}\big)}$ at $y_i$, then we have that $\mu_j\smash{\big(\widetilde{\mathbf{x}}^{\dag},\rho^{\dag},\widetilde{B}^{\dag}\big)
=\big(\widetilde{\mathbf{y}}^{\dag},\varepsilon^{\dag},\widetilde{A}^{\dag}\big)}$.
\end{Lemma}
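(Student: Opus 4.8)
The plan is to unwind both definitions and check that the two seeds $\mu_j\big(\widetilde{\mathbf{x}}^{\dag},\rho^{\dag},\widetilde{B}^{\dag}\big)$ and $\big(\widetilde{\mathbf{y}}^{\dag},\varepsilon^{\dag},\widetilde{A}^{\dag}\big)$ agree component by component: the extended cluster, the set of strings, and the extended exchange matrix. The conceptual point is that freezing at $x_i$ simply reclassifies the index $i$ from mutable to frozen, absorbs $x_i$ into the coefficient ring, deletes the $i$th column of $\widetilde{B}$, and discards $\rho_i$; since $i\neq j$, none of these operations interferes with a mutation in direction $j$.

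First I would treat the matrix part, which is the only place a genuine computation occurs. On one side, $\mu_j$ applied to $\widetilde{B}$ produces $\widetilde{A}$ via the standard rule, and then freezing at $x_i$ deletes the $i$th column of $\widetilde{A}$ to give $\widetilde{A}^{\dag}$. On the other side, freezing at $x_i$ first deletes the $i$th column of $\widetilde{B}$ to give $\widetilde{B}^{\dag}$, and then $\mu_j$ is applied to $\widetilde{B}^{\dag}$. In both orders the entries $b_{kl}$ for $l\neq i$ are transformed by exactly the same formula $b_{kl}+\big([b_{jl}]_+ b_{kj} + b_{jl}[-b_{kj}]_+\big)$ when $k,l\neq j$, and by negation when $k=j$ or $l=j$; the mutation formula in direction $j$ never refers to the $i$th column (note $b_{ji}$, the only entry of that column ever appearing, is only used to update column $i$ itself, which has been deleted). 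Hence the two matrices coincide. One should also confirm that the skew-symmetrizable principal part $B^{\dag}$ mutated in direction $j$ equals the principal part of the mutation — this is immediate since the row and column index sets ${\rm\mathbf{ex}}-\{i\}$ are unaffected by $\mu_j$.

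Next I would handle the cluster and the strings. For the extended cluster: $\mu_j$ changes only the $j$th cluster variable, replacing $x_j$ by $x_j'$, and the exchange relation for $x_j'$ involves the string $\rho_j$, the $j$th column of $\widetilde{B}$, and the $x_k$'s — none of which are altered by freezing at $x_i$ (since $i\neq j$, the column $j$ survives, $\rho_j$ survives, and $x_i$ is still a variable, merely reclassified as frozen). Therefore the new $j$th variable is literally the same Laurent monomial expression on both sides, so $\widetilde{\mathbf{x}}^{\dag}$ mutated at $j$ equals $\widetilde{\mathbf{y}}^{\dag}$. For the strings: $\mu_j(\rho)$ reverses $\rho_j$ to $\rho_j'$ and leaves the others fixed; deleting $\rho_i$ (with $i\neq j$) commutes with this operation trivially, so $\mu_j(\rho^{\dag}) = \varepsilon^{\dag}$. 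Finally one checks the bookkeeping of the coefficient rings: both $\mu_j\big(\widetilde{\mathbf{x}}^{\dag},\rho^{\dag},\widetilde{B}^{\dag}\big)$ and $\big(\widetilde{\mathbf{y}}^{\dag},\varepsilon^{\dag},\widetilde{A}^{\dag}\big)$ have coefficient ring $\mathop{\mathbb{ZP}}\nolimits^{\dag} = \mathop{\mathbb{ZP}}\big[x_i^{\pm1}\big]$, and the frozen part of the extended cluster is $(\widetilde{\mathbf{x}}-\mathbf{x})\cup\{x_i\}$ on both sides (mutation $\mu_j$ does not touch frozen variables).

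The main obstacle — such as it is — is purely notational: one must carefully track which index set is labeling rows versus columns after each operation, and verify that deleting the $i$th column truly commutes with the direction-$j$ mutation formula in all three cases ($k=j$, $l=j$, and $k,l\notin\{i,j\}$). There is no deep content; the lemma is essentially the assertion that the two evident operations on the combinatorial data commute, and the proof is a direct verification. I would present it compactly, checking the matrix identity explicitly and remarking that the cluster and string identities follow because $\mu_j$ and the freezing at $x_i$ act on disjoint pieces of the data when $i\neq j$.
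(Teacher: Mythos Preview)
Your proposal is correct and matches the paper's approach: the paper simply states that the proof is straightforward and omits the details, which is precisely the direct component-by-component verification you outline. Your treatment of the matrix, cluster, and string components is accurate, and there is nothing more to add.
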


\begin{proof}
The proof is straightforward, so we omit the details.
\end{proof}

The freezing of~$\smash{\big(\widetilde{\mathbf{x}},\rho,\widetilde{B}\big)}$ at $\{x_{j_1},\ldots,x_{j_k}\} \subsetneq \mathbf{x}$ is the generalized seed obtained from $\smash{\big(\widetilde{\mathbf{x}},\rho,\widetilde{B}\big)}$ by iterated freezing at each cluster variable in $\{x_{j_1},\ldots,x_{j_k}\}$ in any order. Note that the freezing at $\{x_{j_1},\ldots,x_{j_k}\}$ is compatible with the mutation in direction $l$ for $l\notin\{j_1,\ldots,j_k\}$. It will cause no confusion if we still use $\smash{\big(\widetilde{\mathbf{x}}^{\dag},\rho^{\dag},\widetilde{B}^{\dag}\big)}$ to denote the resulting generalized seed. In the same manner, the generalized cluster algebra $\mathcal{A}\smash{\big(\widetilde{\mathbf{x}}^{\dag},\rho^{\dag},\widetilde{B}^{\dag}\big)}$ is called the freezing of~$\mathcal{A}\smash{\big(\widetilde{\mathbf{x}},\rho,\widetilde{B}\big)}$ at~$\{x_{j_1},\ldots,x_{j_k}\}$. The rank of~$\mathcal{A}\smash{\big(\widetilde{\mathbf{x}}^{\dag},\rho^{\dag},\widetilde{B}^{\dag}\big)}$ is $n-k$.

\section{Lower bounds and upper bounds}
We follow the arguments in \cite{bfz} to prove that lower bounds and upper bounds coincide under the assumptions of~acyclicity and coprimality.
We once defined the initial $\mathbf{ex}$ to be any subset of~$[1,m]$ with $|\mathbf{ex}|=n$ in order to easily describe the freezing in Definition~\ref{def of freezing}. For the sake of~convenience, for the remainder of~the paper, we will take $\mathbf{ex}=[1,n]$ after renumbering the indices of~the original $\mathbf{ex}$.
Let~$\smash{\big(\widetilde{\mathbf{x}},\rho,\widetilde{B}\big)}$ be a generalized seed and $B=(b_{ij})$ the corresponding principal part. Recall that $\smash{\big(\widetilde{\mathbf{x}},\rho,\widetilde{B}\big)}$ is acyclic if and only if there exists a permutation $\sigma\in S_n$ such that $b_{\sigma(l),\sigma(k)}\geq 0$ for $1\leq k<l\leq n$. Hence we can assume that $b_{lk}\geq0$ for $1\leq k<l\leq n$ if the generalized seed $\smash{\big(\widetilde{\mathbf{x}},\rho,\widetilde{B}\big)}$ is acyclic.

\begin{Theorem}\label{thmlinind}
If the generalized seed $\big(\widetilde{\mathbf{x}},\rho,\widetilde{B}\big)$ is acyclic, then the standard monomials in $x_1, \allowbreak x'_1, \ldots, x_n, x'_n$ are $\mathop{\mathbb{ZP}}$-linearly independent in
$\smash{\mathcal{L}\big(\widetilde{\mathbf{x}},\rho,\widetilde{B}\big)}$.
\end{Theorem}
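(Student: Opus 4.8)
The plan is to realize $\mathcal L\big(\widetilde{\mathbf x},\rho,\widetilde B\big)$ inside the Laurent polynomial ring $R:=\mathbb{ZP}[x_1^{\pm1},\dots,x_n^{\pm1}]$ via $x_i'=x_i^{-1}P_i$ (recall $P_i\in\mathbb{ZP}[x_1,\dots,x_n]$), and to detect each standard monomial by its leading term. Every standard monomial in $x_1,x_1',\dots,x_n,x_n'$ equals $M_{\mathbf a}:=\prod_{i=1}^n x_i^{[a_i]_+}(x_i')^{[-a_i]_+}$ for a unique $\mathbf a=(a_1,\dots,a_n)\in\mathbb Z^n$, so it suffices to prove the $M_{\mathbf a}$, $\mathbf a\in\mathbb Z^n$, are $\mathbb{ZP}$-linearly independent in $R$. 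Write $x^{\mathbf b}:=\prod_{j=1}^n x_j^{b_j}$ for $\mathbf b\in\mathbb Z^n$, let $e_1,\dots,e_n$ be the standard basis of $\mathbb Z^n$, and equip $R$ with the lexicographic order $\prec$ on exponent vectors $\mathbf b$ (comparing the exponent of $x_1$ first, then $x_2$, and so on, a smaller exponent counting as smaller), treating $\mathbb{ZP}$ as the coefficient ring. Since $\prec$ is a translation-invariant total order and $\mathbb{ZP}$ is an integral domain, every monomial of a product $fg$ in $R$ has exponent $\succeq$ the sum of the $\prec$-least exponents of $f$ and $g$, and if $f$ and $g$ each have a unique $\prec$-least monomial then so does $fg$, namely the product of those two.

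The key step, where acyclicity is used, is to pin down the $\prec$-least monomial of each $P_i$. After the renumbering fixed just before the theorem we may assume $b_{lk}\ge 0$ for $1\le k<l\le n$. In $P_i=\sum_{r=0}^{d_i}\rho_{i,r}\prod_{j=1}^m x_j^{\,r[\beta_{ji}]_+ + (d_i-r)[-\beta_{ji}]_+}$, the exponent of a cluster variable $x_j$ in the $r$-th summand is $(d_i-r)(-\beta_{ji})$ for $j<i$, is $r\beta_{ji}$ for $j>i$, and is $0$ for $j=i$; so it is monotone in $r$ (increasing when $j>i$, decreasing when $j<i$), and the $\mathbb Z^n$-exponent vectors of the $d_i+1$ summands are pairwise distinct unless the $i$-th column of $\widetilde B$ vanishes on every cluster row. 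It follows that $P_i$ has a unique $\prec$-least monomial $p_i x^{\mathbf v_i}$ with $p_i\in\mathbb{ZP}\setminus\{0\}$ and $\mathbf v_i$ supported on $\{i+1,\dots,n\}$; explicitly, $\mathbf v_i$ is the exponent of the $r=d_i$ summand, namely $\sum_{j>i}b_{ji}e_j$, when the $i$-th column has a nonzero entry $b_{ji}$ with $j<i$, and $\mathbf v_i=0$ otherwise.

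Multiplicativity of $\prec$-least monomials then shows that the $\prec$-least monomial of $M_{\mathbf a}$ is $c_{\mathbf a}x^{\phi(\mathbf a)}$, where $c_{\mathbf a}:=\prod_{i=1}^n p_i^{[-a_i]_+}\in\mathbb{ZP}\setminus\{0\}$ and $\phi(\mathbf a):=\sum_{i=1}^n a_i e_i+\sum_{i=1}^n [-a_i]_+\mathbf v_i$, and that every monomial of $M_{\mathbf a}$ has exponent $\succeq\phi(\mathbf a)$. Since each $\mathbf v_i$ is supported on $\{i+1,\dots,n\}$, the $j$-th coordinate of $\phi(\mathbf a)$ is $a_j$ plus a quantity depending only on $a_1,\dots,a_{j-1}$, so $\mathbf a\mapsto\phi(\mathbf a)$ is injective. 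Now assume $\sum_{\mathbf a}\gamma_{\mathbf a}M_{\mathbf a}=0$ is a nontrivial $\mathbb{ZP}$-linear relation (a finite sum, not all $\gamma_{\mathbf a}$ zero), and choose $\mathbf a^{*}$ with $\gamma_{\mathbf a^{*}}\neq 0$ such that $\phi(\mathbf a^{*})$ is $\prec$-least among the exponents $\phi(\mathbf a)$ with $\gamma_{\mathbf a}\neq 0$ (these being distinct by injectivity). Then $x^{\phi(\mathbf a^{*})}$ occurs in $\gamma_{\mathbf a^{*}}M_{\mathbf a^{*}}$ with coefficient $\gamma_{\mathbf a^{*}}c_{\mathbf a^{*}}\neq 0$, whereas for any other $\mathbf a$ with $\gamma_{\mathbf a}\neq 0$ every monomial of $M_{\mathbf a}$ has exponent $\succeq\phi(\mathbf a)\succ\phi(\mathbf a^{*})$, so $x^{\phi(\mathbf a^{*})}$ does not occur in $\gamma_{\mathbf a}M_{\mathbf a}$. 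Hence the coefficient of $x^{\phi(\mathbf a^{*})}$ in $\sum_{\mathbf a}\gamma_{\mathbf a}M_{\mathbf a}$ is $\gamma_{\mathbf a^{*}}c_{\mathbf a^{*}}\neq 0$, contradicting that this sum vanishes.

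The step I expect to be the main obstacle is the key computation of the second paragraph: one has to work with the full multinomial exchange polynomial $P_i$ — $d_i+1$ terms, with arbitrary string coefficients $\rho_{i,r}$, rather than a binomial — verify that the exponent vectors of its summands genuinely fail to collide, so no cancellation can hide the $\prec$-bottom monomial, and, above all, check that acyclicity forces the leading exponent $\mathbf v_i$ to be supported on indices strictly greater than $i$; this ``triangularity'' is exactly what makes $\phi$ injective and makes the whole argument go through.
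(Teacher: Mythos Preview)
Your proof is correct and follows essentially the same strategy as the paper's: embed the standard monomials in the Laurent ring, use the acyclic ordering $b_{lk}\ge 0$ for $l>k$ to see that the $\prec$-minimal monomial of $x_i'$ has $x_i$-exponent $-1$ and vanishing exponents on $x_1,\dots,x_{i-1}$, and deduce that distinct standard monomials have distinct $\prec$-minimal terms. Your treatment is in fact a bit more careful than the paper's in distinguishing the cases for $\mathbf v_i$ (the paper tacitly writes the $r=d_i$ summand as the lex-first term of $P_i$ in all cases), but the argument is the same.
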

\begin{proof}
By using the same technique as in \cite{bfz}, we can prove the statement and we write the proof down here for readers' convenience.

For any $\mathbf{a}=(a_1,\ldots,a_n)\in\mathbb{Z}^{n}$, we denote by
\begin{gather*}
\mathbf{x}^{(\mathbf{a})}=x^{(a_1)}_1 x^{(a_2)}_2\cdots x^{(a_n)}_n
\end{gather*}
the standard monomial in $\{x_1,x'_1,\ldots,x_n,x'_n\}$,
where $x^{(a_i)}_i= x^{a_i}_i$ if $a_i\geq0$, and $x^{(a_i)}_i= (x'_i)^{-a_i}$ if~$a_i<0$, and we denote the Laurent monomial $x^{a_1}_1x^{a_2}_2\cdots x^{a_n}_n$ by $\mathbf{x}^{\mathbf{a}}$.

Let~``$\prec$" denote the lexicographic order on $\mathbb{Z}^{n}$ which induces
%i.e. two vectors $\mathbf{a},\mathbf{a}'\in\ZZ^n$ satisfy that $\mathbf{a}\prec\mathbf{a}'$ if and only if there exists some $k\in[1,n]$ such that $a_k<a'_k$ and $a_i=a'_i$ for all $i\in[1,k-1]$.
the lexicographic order on the Laurent monomials as
\begin{gather*}
\mathbf{x}^{\mathbf{a}}\prec\mathbf{x}^{\mathbf{a}'}
\qquad\text{if}\qquad
\mathbf{a}\prec\mathbf{a}'.
\end{gather*}

Note that $x^{(-1)}_i=x'_i=x^{-1}_iP_i$ for $i\in [1,n]$. Using the assumption that $b_{lk}\geq0$ for $l>k$, it follows that the lexicographically first monomial which appears in $x^{(-1)}_i$ is $x^{-1}_i\prod\limits_{j=i+1}^{n}x^{b_{ji}}_j\!\! \prod\limits_{k=n+1}^{m}x^{[b_{ki}]_+}_k$. We then conclude that the lexicographically first monomial that appears in $\mathbf{x}^{(\mathbf{a}')}$
 is preceded by~the one in $\mathbf{x}^{(\mathbf{a})}$
 if $\mathbf{a}\prec \mathbf{a}'$. Therefore the standard monomials in $\{x_1,x'_1,\ldots,x_n,x'_n\}$ are $\mathop{\mathbb{ZP}}$-linearly independent.
\end{proof}

\begin{Remark}It is to be expected that the converse of~the above theorem, i.e., ``For a gene\-ra\-li\-zed seed $\smash{\big(\widetilde{\mathbf{x}},\rho,\widetilde{B}\big)}$, if the standard monomials in $x_1, x'_1, \ldots, x_n, x'_n$ are $\mathop{\mathbb{ZP}}$-linearly independent in~$\mathcal{L}\smash{\big(\widetilde{\mathbf{x}},\rho,\widetilde{B}\big)}$, then the generalized seed $\smash{\big(\widetilde{\mathbf{x}},\rho,\widetilde{B}\big)}$ is acyclic" is also true as it was proved in~\mbox{\cite[Proposition~5.1]{bfz}} for cluster algebras. But we have not been able to prove this.
\end{Remark}

The following two lemmas are generalizations of~\cite[Lemmas~4.1 and~4.2]{bfz}. The proofs are omitted as they are similar to proofs in~\cite{bfz}.
\begin{Lemma}\label{lemup}
We have that
\begin{gather}\label{equxxprime}
\mathcal{U}\big(\widetilde{\mathbf{x}},\rho,\widetilde{B}\big)= \bigcap\limits_{i=1}^{n} \mathop{\mathbb{ZP}}\big[x_{1}^{\pm1},\ldots,x_{i-1}^{\pm1},x_{i},x_{i}',x_{i+1}^{\pm1},\ldots,x_{n}^{\pm1}\big].
\end{gather}
\end{Lemma}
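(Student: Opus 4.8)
The plan is to reduce \eqref{equxxprime} to a single-index identity between Laurent polynomial rings and then intersect, following the proof of \cite[Lemma~4.1]{bfz}. Write $L:=\mathop{\mathbb{ZP}}\big[x_1^{\pm1},\ldots,x_n^{\pm1}\big]$ for the ambient ring and, for each $i\in[1,n]$, set $L_i:=\mathop{\mathbb{ZP}}\big[x_1^{\pm1},\ldots,x_{i-1}^{\pm1},x_{i+1}^{\pm1},\ldots,x_n^{\pm1}\big]$, so that $L=L_i\big[x_i^{\pm1}\big]$, the $i$-th factor in the definition of $\mathcal{U}\big(\widetilde{\mathbf{x}},\rho,\widetilde{B}\big)$ equals $L_i\big[(x_i')^{\pm1}\big]$, and the $i$-th ring on the right-hand side of \eqref{equxxprime} equals $L_i[x_i,x_i']$. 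Since $\mathcal{U}\big(\widetilde{\mathbf{x}},\rho,\widetilde{B}\big)=L\cap\bigcap_{i=1}^{n}L_i\big[(x_i')^{\pm1}\big]$ and $A\cap\bigcap_i B_i=\bigcap_i(A\cap B_i)$, we have $\mathcal{U}\big(\widetilde{\mathbf{x}},\rho,\widetilde{B}\big)=\bigcap_{i=1}^{n}\big(L\cap L_i\big[(x_i')^{\pm1}\big]\big)$, so it suffices to prove, for each fixed $i$, the identity $L\cap L_i\big[(x_i')^{\pm1}\big]=L_i[x_i,x_i']$. The only feature that is genuinely ``generalized'' here is that $P_i$ is now a multinomial; but all that will be used is that $P_i\in L_i$ (noted after Definition~\ref{def of gca}) and that $\mathop{\mathbb{ZP}}$ — hence $L_i$ and $L_i[x_i]$ — is a UFD, so the binomial/multinomial distinction is immaterial.

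For the one-index identity I would argue as follows. The inclusion $\supseteq$ is immediate: $x_i'=P_ix_i^{-1}\in L$ and $x_i=P_i(x_i')^{-1}\in L_i\big[(x_i')^{\pm1}\big]$ because $P_i\in L_i$. For $\subseteq$, take $g\in L\cap L_i\big[(x_i')^{\pm1}\big]$ and write $g=\sum_{l\in\mathbb{Z}}f_l(x_i')^{l}$ with $f_l\in L_i$ and only finitely many $f_l$ nonzero; substituting $x_i'=P_ix_i^{-1}$ gives $g=\sum_l f_lP_i^{l}x_i^{-l}$. The partial sum $\sum_{l\ge0}f_l(x_i')^{l}$ already lies in $L_i[x_i,x_i']$ term by term, so the tail $h:=\sum_{r>0}f_{-r}P_i^{-r}x_i^{r}=g-\sum_{l\ge0}f_l(x_i')^{l}$ must lie in $L$. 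If all $f_{-r}$ vanish, or if $P_i$ is a unit of $L_i$, there is nothing more to do; otherwise let $R$ be the largest $r$ with $f_{-r}\ne0$ and write $h=P_i^{-R}q$ with $q:=\sum_{r=1}^{R}f_{-r}P_i^{R-r}x_i^{r}\in L_i[x_i]$. Membership $h\in L=L_i\big[x_i^{\pm1}\big]$ means $q/P_i^{R}=u/x_i^{N}$ for some $u\in L_i[x_i]$ and $N\ge0$; since $L_i[x_i]$ is a UFD in which $P_i$ (having $x_i$-degree $0$) is coprime to $x_i$, this forces $P_i^{R}\mid q$ in $L_i[x_i]$, and comparing coefficients of $x_i^{r}$ then gives $P_i^{r}\mid f_{-r}$ in $L_i$ for every $r>0$. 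Writing $f_{-r}=P_i^{r}g_r$ with $g_r\in L_i$, each tail term becomes $f_{-r}(x_i')^{-r}=g_rx_i^{r}\in L_i[x_i]$, so $g\in L_i[x_i,x_i']$ as desired. Intersecting these identities over $i\in[1,n]$ yields \eqref{equxxprime}.

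The main obstacle I anticipate is precisely the unique-factorization step in the middle of the previous paragraph: one must check carefully that a fraction of the shape $P_i^{-R}\cdot(\text{a polynomial in }x_i\text{ over }L_i)$ can belong to $L_i\big[x_i^{\pm1}\big]$ only when $P_i^{R}$ already divides the numerator in $L_i[x_i]$, which rests on $L_i$ and $L_i[x_i]$ being UFDs and on $P_i$ involving no power of $x_i$ (guaranteed since $b_{ii}=0$). The two degenerate cases — no negative powers of $x_i'$ occurring in $g$, and $P_i$ a unit of $L_i$ — should be dispatched first; everything else is routine bookkeeping, which is presumably why the authors chose to omit the details.
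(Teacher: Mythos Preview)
Your proposal is correct and follows precisely the approach the paper intends: the authors omit the proof of Lemma~\ref{lemup} entirely, saying only that it is the same as \cite[Lemma~4.1]{bfz}, and your argument is exactly that reduction to the single-index identity $L\cap L_i\big[(x_i')^{\pm1}\big]=L_i[x_i,x_i']$ followed by the divisibility analysis. One minor simplification: once you know $h=\sum_{r>0}(f_{-r}P_i^{-r})x_i^{r}\in L_i\big[x_i^{\pm1}\big]$, uniqueness of the Laurent expansion in $x_i$ over the fraction field of $L_i$ already forces each coefficient $f_{-r}P_i^{-r}$ to lie in $L_i$, so the UFD step (while valid) is not strictly needed.
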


\begin{Lemma}\label{lemdivisible}
Given $y\in\mathbb{Q}(x_1,\ldots,x_m)$, the element $y\in\mathop{\mathbb{ZP}}\big[x_{1},x_{1}',x_{2}^{\pm1},\ldots,x_{n}^{\pm1}\big]$ if and only if $y=\sum\limits_{k=a}^{b}c_k x_{1}^{k}$ $(a\leq b)$ satisfies one of~the following conditions:
\begin{itemize}\itemsep=0pt
 \item[{\rm (1)}] if $a\geq0$, we have $c_k\in\mathop{\mathbb{ZP}}\!\big[x_{2}^{\pm1},\ldots,x_{n}^{\pm1}\big];$
 \item[{\rm (2)}] if $a<0$, we have that $c_k\in\mathop{\mathbb{ZP}}\!\big[x_{2}^{\pm1},\ldots,x_{n}^{\pm1}\big]$ and $c_kP_{1}^{k}\in\mathop{\mathbb{ZP}}\!\big[x_{2}^{\pm1},\ldots,x_{n}^{\pm1}\big]$ for $k<0$ $\big($namely, $c_k$ is divisible by $P_{1}^{|k|}$ in $\smash{\mathop{\mathbb{ZP}}\!\big[x_{2}^{\pm1},\ldots,x_{n}^{\pm1}\big]\big)}$.
\end{itemize}
\end{Lemma}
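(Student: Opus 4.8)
The plan is to reduce the statement to two elementary facts: that $x_1$ is transcendental over the subring generated by the remaining variables, and that $P_1$ does not involve $x_1$. Write $R:=\mathop{\mathbb{ZP}}\big[x_2^{\pm1},\ldots,x_n^{\pm1}\big]$; this is an integral domain with field of fractions $\mathbb{Q}(x_2,\ldots,x_m)$, over which $x_1$ is transcendental. From the discussion following the definition of the $P_i$ we have $P_1=x_1x_1'\in\mathop{\mathbb{ZP}}[x_2,\ldots,x_n]\subseteq R$, so $x_1'=P_1x_1^{-1}$, and consequently $\mathop{\mathbb{ZP}}\big[x_1,x_1',x_2^{\pm1},\ldots,x_n^{\pm1}\big]=R[x_1,x_1']\subseteq R\big[x_1^{\pm1}\big]$. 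Since $x_1$ is transcendental over the fraction field of $R$, every element of $R\big[x_1^{\pm1}\big]$ has a \emph{unique} expansion $y=\sum_{k=a}^{b}c_kx_1^{k}$ with $c_k\in R$; this is the expansion appearing in the statement, and in particular $y\in R[x_1,x_1']$ already forces $c_k\in R$ for every $k$, so the requirement ``$c_k\in\mathop{\mathbb{ZP}}\big[x_2^{\pm1},\ldots,x_n^{\pm1}\big]$'' is not an extra condition but a consequence.

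For the ``if'' direction, suppose $y=\sum_{k=a}^{b}c_kx_1^{k}$ with $c_k\in R$ and, when $a<0$, $c_k=P_1^{-k}\gamma_k$ with $\gamma_k\in R$ for each $k<0$ (this is exactly the divisibility hypothesis). For $k\geq0$ the summand $c_kx_1^{k}$ lies in $R[x_1]$; for $k<0$ we rewrite $c_kx_1^{k}=\gamma_kP_1^{-k}x_1^{k}=\gamma_k\big(P_1x_1^{-1}\big)^{-k}=\gamma_k(x_1')^{-k}\in R[x_1']$. Adding these over $k$ gives $y\in R[x_1,x_1']=\mathop{\mathbb{ZP}}\big[x_1,x_1',x_2^{\pm1},\ldots,x_n^{\pm1}\big]$.

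For the ``only if'' direction, assume $y\in R[x_1,x_1']$ and write it as a finite sum $y=\sum_{p,q\geq0}\alpha_{p,q}x_1^{p}(x_1')^{q}$ with $\alpha_{p,q}\in R$. Since $x_1^{p}(x_1')^{q}=x_1^{p}P_1^{q}x_1^{-q}=P_1^{q}x_1^{p-q}$, collecting the coefficient of each power of $x_1$ and using uniqueness of the Laurent expansion gives $c_k=\sum_{p-q=k,\ p,q\geq0}\alpha_{p,q}P_1^{q}\in R$. When $k<0$, every term in this sum has $q=p-k\geq -k>0$, hence $P_1^{-k}$ divides $P_1^{q}$ and therefore $P_1^{-k}\mid c_k$ in $R$, which is condition~(2); condition~(1) holds automatically when $a\geq0$. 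This argument is pure exponent bookkeeping and parallels \cite[Lemma~4.2]{bfz}; the only input from the generalized setting is that $P_1$ remains a polynomial in the variables $x_j$ with $j\neq1$ (so that $x_1'=P_1x_1^{-1}$ with $P_1\in R$), the number of terms $\rho_{1,r}$ inside $P_1$ being irrelevant. The only point that genuinely requires care---the potential obstacle---is the well-definedness and uniqueness of the expansion $y=\sum_kc_kx_1^{k}$ over $R$, which I would justify via $\{x_1,\ldots,x_m\}$ being a transcendence basis over $\mathbb{Q}$.
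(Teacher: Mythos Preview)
Your proof is correct and follows exactly the approach the paper has in mind: the paper omits the proof, citing \cite[Lemma~4.2]{bfz}, and your argument is precisely the standard one from that reference, with the single relevant observation that $P_1\in\mathop{\mathbb{ZP}}[x_2,\ldots,x_n]\subseteq R$ continues to hold in the generalized setting regardless of how many terms $P_1$ has. There is nothing to add.
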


Let~$\mathop{\mathbb{ZP}}\nolimits^{\rm st}[x_{2},x'_{2},\ldots,x_n,x'_{n}] \subseteq $ $\mathop{\mathbb{ZP}} [x_{2},x'_{2},\ldots,x_n,x'_{n}]$ denote the $\mathop{\mathbb{ZP}}$-linear space spanned by~the standard monomials in $\{x_{2},x'_{2},\ldots,x_n,x'_{n}\}$. Similarly, we denote by
\begin{gather*}
\mathop{\mathbb{ZP}}\nolimits^{\rm st}[x_1,x_{2},x'_{2},\ldots,x_n,x'_{n}]
\end{gather*}
the $\mathop{\mathbb{ZP}}[x_1]$-linear space spanned by the standard monomials in $\{x_{2},x'_{2},\ldots,x_n,x'_{n}\}$.
Using the exchange relations
\begin{gather*}
x_ix'_i=P_i\in\mathop{\mathbb{ZP}}[x_1,\ldots,x_{i-1},x_{i+1},\ldots,x_n]
\end{gather*}
repeatedly, it follows that any element in $\mathop{\mathbb{ZP}}[x_{2},x'_{2},\ldots,x_n,x'_{n}]$ is a $\mathop{\mathbb{ZP}}[x_1]$-linear combination of~standard monomials in $x_{2}, x'_{2}, \ldots, x_n, x'_{n}$. Thus, we have{\samepage
\begin{gather*}
\mathop{\mathbb{ZP}}[x_{2},x'_{2},\ldots,x_n,x'_{n}] \subseteq \mathop{\mathbb{ZP}}\nolimits^{\rm st}[x_1,x_{2},x'_{2},\ldots,x_n,x'_{n}].
\end{gather*}
Note that, this relation is the analogue of~\cite[equation~(6.1)]{bfz}.}

Define
\begin{gather*}
f_1\colon\ \mathop{\mathbb{ZP}}\big[x_{2},x'_{2},\ldots,x_n,x'_{n}]\rightarrow \mathop{\mathbb{ZP}}[x_1,x^{\pm1}_2,\ldots,x^{\pm1}_{n}\big]\qquad \text{by}\quad
x_i\mapsto x_i \quad\text{and}\quad x'_{i}\mapsto x_{i}^{-1}P_i
\end{gather*}
and
\begin{gather*}
f_2\colon\ \mathop{\mathbb{ZP}}\big[x_{1},x^{\pm1}_{2},\ldots,x^{\pm1}_{n}\big]\rightarrow \mathop{\mathbb{ZP}}\big[x^{\pm1}_2,\ldots,x^{\pm1}_{n}\big]\qquad \text{by}\quad
x_1\mapsto 0 \quad\text{and} \quad x_{i}^{\pm1}\mapsto x_{i}^{\pm1}.
\end{gather*}
Both are algebra homomorphisms, and consequently,
\begin{gather*}
f:=f_2\circ f_1\colon\ \mathop{\mathbb{ZP}}[x_{2},x'_{2},\ldots,x_n,x'_{n}]\rightarrow \mathop{\mathbb{ZP}}\big[x^{\pm1}_2,\ldots,x^{\pm1}_{n}\big]
\end{gather*}
is also an~algebra homomorphism.

Given $y\in\mathop{\mathbb{ZP}}\!\big[x^{\pm1}_1,\ldots, x^{\pm1}_{n}\big]$, as in \cite[Definition~6.3]{bfz}, we also define the leading term $\mathop{\rm LT}(y)$ of~$y$ with respect to $x_1$ to be the sum of~Laurent monomials with the smallest power of~$x_1$, which are obtained from the Laurent expression of~$y$ with non-zero coefficient.

The following results parallel to \cite[Lemmas 6.2,~6.4 and~6.5]{bfz} can be obtained similarly.
\begin{Lemma}\label{lemdirectsum}
We have that
\begin{itemize}\itemsep=0pt
 \item[{\rm (1)}] $\mathop{\mathbb{ZP}}[x_{2},x'_{2},\ldots,x_n,x'_{n}]={\rm Ker}(f)\oplus \mathop{\mathbb{ZP}}\nolimits^{\rm st}[x_{2},x'_{2},\ldots,x_n,x'_{n}];$
 \item[{\rm (2)}] if $y\in\mathop{\mathbb{ZP}}\!\big[x^{\pm1}_1,x_2,x'_2,\ldots,x_n,x'_{n}\big]$ and $y=\sum\limits_{k=a}^{b}c_kx_{1}^{k}$ such that we have $c_a\neq0$ and $c_k\in \mathop{\mathbb{ZP}}\nolimits^{\rm{st}}[x_2,x'_2,\ldots,x_n,x'_{n}]$, then ${\rm LT}(y)=f(c_a)x_{1}^{a};$
\item[{\rm (3)}] $\mathop{\mathbb{ZP}}\!\big[x_{1}^{\pm1},x_{2},x_{2}',\ldots,x_{n},x_{n}'\big] \cap \mathop{\mathbb{ZP}}\!\big[x_{1},x_{2}^{\pm1},\ldots,x_{n}^{\pm1}\big] =\mathop{\mathbb{ZP}}[x_{1},x_{2},x_{2}',\ldots,x_{n},x_{n}']$.
\end{itemize}
\end{Lemma}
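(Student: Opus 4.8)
The plan is to reduce all three parts to a single claim — that the homomorphism $f$ is injective on $\mathop{\mathbb{ZP}}\nolimits^{\rm st}[x_2,x'_2,\ldots,x_n,x'_n]$ — and to prove that claim by a leading-monomial argument in the spirit of the proof of Theorem~\ref{thmlinind} and of \cite[Lemma~6.2]{bfz}. For a standard monomial one has $f\big(\mathbf{x}^{(\mathbf{a})}\big)=\mathbf{x}^{\mathbf{a}}\prod_{i:a_i<0}f_2(P_i)^{-a_i}$, where $f_2(P_i)=P_i|_{x_1=0}$ is nonzero (under acyclicity $b_{1i}\le0$, so the $r=d_i$ summand of $P_i$ has $x_1$-degree $0$ and survives) and, since $P_i\in\mathop{\mathbb{ZP}}[x_1,\ldots,x_{i-1},x_{i+1},\ldots,x_n]$, does not involve $x_i$. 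Fix the lexicographic order on Laurent monomials in $x_2,\ldots,x_n$ with $x_2$ most significant; as this order is multiplicative, the lex-smallest term of $f\big(\mathbf{x}^{(\mathbf{a})}\big)$ is $\ell(\mathbf{a})=\mathbf{x}^{\mathbf{a}}\prod_{i:a_i<0}\ell_i^{-a_i}$, where $\ell_i$ is the lex-smallest term of $f_2(P_i)$ and has a nonzero $\mathop{\mathbb{ZP}}$-coefficient. Using $b_{lk}\ge0$ for $l>k$ one checks that each $f_2(P_i)$ with $i>k$ contributes $x_k$-degree $0$ to $\ell_i$ while $f_2(P_k)$ does not involve $x_k$, so the $x_k$-degree of $\ell(\mathbf{a})$ equals $a_k$ plus a nonnegative-integer combination of the $[-a_i]_+$ with $i<k$ only; hence, knowing $a_2,\ldots,a_{k-1}$, the $x_k$-degree of $\ell(\mathbf{a})$ recovers $a_k$, and $\ell$ is injective. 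Now, if $\sum_{\mathbf{a}}c_{\mathbf{a}}f\big(\mathbf{x}^{(\mathbf{a})}\big)=0$ with $c_{\mathbf{a}}\in\mathop{\mathbb{ZP}}$ not all zero, the lex-smallest Laurent monomial $M$ that appears in any $f\big(\mathbf{x}^{(\mathbf{a})}\big)$ with $c_{\mathbf{a}}\ne0$ appears in exactly one of them — the one with $\ell(\mathbf{a}^0)=M$, and there with nonzero coefficient — so, $\mathop{\mathbb{ZP}}$ being a domain, $c_{\mathbf{a}^0}=0$, contradicting that $\mathbf{a}^0$ is in the support.

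Granting this, part~(1) follows: the intersection ${\rm Ker}(f)\cap\mathop{\mathbb{ZP}}\nolimits^{\rm st}[x_2,x'_2,\ldots,x_n,x'_n]$ is $0$ by injectivity, and for the sum, given $z\in\mathop{\mathbb{ZP}}[x_2,x'_2,\ldots,x_n,x'_n]$ I would use the reduction via $x_ix'_i=P_i$ recalled before the Lemma to write $z=\sum_{\mathbf{a}}q_{\mathbf{a}}(x_1)\mathbf{x}^{(\mathbf{a})}$ with $q_{\mathbf{a}}\in\mathop{\mathbb{ZP}}[x_1]$, put $z^{\sharp}:=\sum_{\mathbf{a}}q_{\mathbf{a}}(0)\mathbf{x}^{(\mathbf{a})}\in\mathop{\mathbb{ZP}}\nolimits^{\rm st}[x_2,x'_2,\ldots,x_n,x'_n]$, and observe that $z-z^{\sharp}=x_1w$ with $w\in\mathop{\mathbb{ZP}}\big[x_1,x_2^{\pm1},\ldots,x_n^{\pm1}\big]$, so $f(z-z^{\sharp})=f_2(x_1w)=0$ and $z-z^{\sharp}\in{\rm Ker}(f)$. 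For part~(2), the key observation is $P_i=f_2(P_i)+x_1(\cdots)$, whence $\mathbf{x}^{(\mathbf{a})}=f\big(\mathbf{x}^{(\mathbf{a})}\big)+x_1(\cdots)$ with the bracket in $x_1\mathop{\mathbb{ZP}}\big[x_1,x_2^{\pm1},\ldots,x_n^{\pm1}\big]$; hence $c_k=f(c_k)+x_1(\cdots)$ and $y=\sum_kf(c_k)x_1^{k}+\sum_kx_1^{k+1}(\cdots)$, so the smallest power of $x_1$ in the Laurent expansion of $y$ is $a$ with coefficient $f(c_a)$, nonzero by the injectivity above, giving ${\rm LT}(y)=f(c_a)x_1^{a}$.

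For part~(3), the inclusion $\supseteq$ is immediate because $x_1$, the $x_i$, and $x'_i=x_i^{-1}P_i$ (with $P_i\in\mathop{\mathbb{ZP}}[x_1,\ldots,x_n]$) all lie in both rings on the left-hand side. For $\subseteq$, take a nonzero $y$ in the intersection and, again using $x_ix'_i=P_i$, write $y=\sum_{k=a}^{b}c_kx_1^{k}$ with $c_k\in\mathop{\mathbb{ZP}}\nolimits^{\rm st}[x_2,x'_2,\ldots,x_n,x'_n]$ and $c_a\ne0$. By part~(2), ${\rm LT}(y)=f(c_a)x_1^{a}$ is a nonzero multiple of $x_1^{a}$, so the Laurent expansion of $y$ has smallest $x_1$-power exactly $a$; but $y\in\mathop{\mathbb{ZP}}\big[x_1,x_2^{\pm1},\ldots,x_n^{\pm1}\big]$ forces all $x_1$-powers to be nonnegative, so $a\ge0$ and $y=\sum_{k=a}^{b}c_kx_1^{k}\in\mathop{\mathbb{ZP}}[x_1,x_2,x'_2,\ldots,x_n,x'_n]$.

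The main obstacle is the injectivity of $f$ on standard monomials — equivalently, injectivity of $\ell$ — which is the only place acyclicity is genuinely used. Beyond \cite{bfz}, one must also check that the strings $\rho_{i,r}$, being monomials in the frozen variables with $\rho_{i,0}=\rho_{i,d_i}=1$, do not affect the $x_2,\ldots,x_n$-lexicographic comparison, and that replacing the binomial of \cite{bfz} by the multinomial $P_i=\sum_{r=0}^{d_i}\rho_{i,r}\prod_jx_j^{r[\beta_{ji}]_++(d_i-r)[-\beta_{ji}]_+}$ still permits identifying the lex-smallest surviving term of $f_2(P_i)$ from the sign pattern of the $\beta_{ji}$ dictated by acyclicity. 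Once this bookkeeping is in place, everything else is formal, as above.
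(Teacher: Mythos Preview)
Your proposal is correct and follows essentially the same route as the paper, which simply defers to \cite[Lemmas~6.2,~6.4,~6.5]{bfz}: the core is the injectivity of $f$ on $\mathop{\mathbb{ZP}}\nolimits^{\rm st}[x_2,x'_2,\ldots,x_n,x'_n]$ via a lexicographic leading-term argument (exactly the mechanism behind Theorem~\ref{thmlinind}), from which parts (1)--(3) are formal. Your explicit checks that $f_2(P_i)\neq 0$ and that the lex-smallest term $\ell_i$ of $f_2(P_i)$ has $x_k$-degree $0$ for $k\le i$ (because the $r=d_i$ summand survives and already has this property, while all surviving summands have nonnegative $x_k$-degree for $k<i$) are precisely the extra bookkeeping needed to pass from the binomial case of \cite{bfz} to the multinomial $P_i$ with strings $\rho_{i,r}$, and they are handled correctly.
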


\begin{Lemma}\label{lemim}
We have that
\begin{gather*}
\mathop{\rm Im} (f)=\mathop{\mathbb{ZP}}\big[x_2,\widehat{x}_{2},\ldots,x_n,\widehat{x}_{n}\big],
\end{gather*}
where $\widehat{x}_{i}:= x_{i}'$ if $b_{1i}=0$ and $\widehat{x}_{i}:= x_{i}^{-1}$ otherwise.
\end{Lemma}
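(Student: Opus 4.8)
The plan is to compute $f$ explicitly on the generators $x_i,x_i'$ ($i\in[2,n]$) of $\mathop{\mathbb{ZP}}[x_2,x_2',\ldots,x_n,x_n']$, read off the inclusion ``$\subseteq$'' at once, and prove ``$\supseteq$'' by a downward induction on the index. Clearly $f(x_i)=x_i$. For $f(x_i')=x_i^{-1}f_2(P_i)$ one uses the standing acyclicity normalization $b_{lk}\geq 0$ for $k<l$: this forces $b_{1i}\leq 0$ for all $i\in[2,n]$, so $b_{1i}\neq 0$ exactly means $b_{1i}<0$. Since each $\rho_{i,r}$ is a monomial in the frozen variables only, the exponent of $x_1$ in the $r$-th term of $P_i=\sum_{r=0}^{d_i}\rho_{i,r}\prod_{j=1}^m x_j^{r[\beta_{ji}]_+ +(d_i-r)[-\beta_{ji}]_+}$ equals $(d_i-r)[-\beta_{1i}]_+$. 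Hence if $b_{1i}=0$ then $f_2$ fixes $P_i$ and $f(x_i')=x_i^{-1}P_i=x_i'=\widehat x_i$; if $b_{1i}<0$ then only the $r=d_i$ term survives $f_2$, so $f_2(P_i)=\prod_{j=2}^{m}x_j^{[b_{ji}]_+}$, and (acyclicity again) $[b_{ji}]_+>0$ forces $j>i$, so $f_2(P_i)=m_i$ is a monomial in $x_{i+1},\ldots,x_n$ times a monomial in the frozen variables (a unit of $\mathop{\mathbb{ZP}}$), and $f(x_i')=x_i^{-1}m_i=\widehat x_i\,m_i$.

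Writing $R:=\mathop{\mathbb{ZP}}[x_2,\widehat x_2,\ldots,x_n,\widehat x_n]$, the inclusion $\mathop{\rm Im}(f)\subseteq R$ is now immediate: $f$ sends $x_i$ to $x_i\in R$ and $x_i'$ either to $\widehat x_i\in R$ or to $\widehat x_i m_i$ with $m_i\in\mathop{\mathbb{ZP}}[x_{i+1},\ldots,x_n]\subseteq R$.

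For $\mathop{\rm Im}(f)\supseteq R$, observe that $x_i=f(x_i)\in\mathop{\rm Im}(f)$ for every $i$, and $\widehat x_i=f(x_i')\in\mathop{\rm Im}(f)$ whenever $b_{1i}=0$; so everything reduces to showing $x_i^{-1}\in\mathop{\rm Im}(f)$ for each $i$ with $b_{1i}\neq 0$. I would prove this by downward induction on such $i$, so that one may assume $\widehat x_k\in\mathop{\rm Im}(f)$ for all $k>i$ (automatic if $b_{1k}=0$, by induction if $b_{1k}\neq 0$). Starting from $f(x_i')=x_i^{-1}m_i\in\mathop{\rm Im}(f)$ one multiplies by suitable powers of the $\widehat x_k$ ($k>i$) to cancel the cluster variables appearing in $m_i$, using the relations $x_k\widehat x_k=x_kx_k'=P_k$ (if $b_{1k}=0$) and $x_k\widehat x_k=1$ (if $b_{1k}\neq 0$); after one round this produces an element $x_i^{-1}\cdot(\text{unit})\cdot\prod_k P_k^{c_k}\in\mathop{\rm Im}(f)$, the product over certain $k>i$ with $b_{1k}=0$ and $c_k\geq 1$. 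Each such $P_k$ is the sum of a Laurent monomial (which, by acyclicity, involves only $x_l$ with $l>k$) and remaining terms; the terms divisible by $x_i$ lie in $\mathop{\mathbb{ZP}}[x_2,\ldots,x_n]\subseteq\mathop{\rm Im}(f)$, each other remaining term is divisible by a cluster variable that entered the computation at an earlier stage and is thereby recognizable as a $\mathop{\mathbb{ZP}}[x_2,\ldots,x_n]$-multiple of some $f(x_j')\in\mathop{\rm Im}(f)$, and the surviving Laurent-monomial factors push the supports strictly upward. Iterating, the process terminates and leaves $x_i^{-1}\cdot(\text{unit})\in\mathop{\rm Im}(f)$, hence $x_i^{-1}\in\mathop{\rm Im}(f)$.

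The routine parts are the computation of $f$ on generators and the inclusion $\subseteq$; the technical heart — and the main obstacle — is the bookkeeping in the previous paragraph: checking, by an inner induction, that every error term produced along the iteration is either $x_i$-divisible or a monomial multiple of a previously obtained element of $\mathop{\rm Im}(f)$, and that the ``supports move up'' monovariant forces the iteration to halt. One may also wish to record, as in the passage preceding the lemma (using Lemma~\ref{lemdirectsum}(1)), that $\mathop{\rm Im}(f)$ is the $\mathop{\mathbb{ZP}}$-span of the images $f(\mathbf{x}^{(\mathbf{a})})$ of the standard monomials, which can be used to organize the induction.
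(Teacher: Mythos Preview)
Your computation of $f(x_i')$ and the inclusion $\mathop{\rm Im}(f)\subseteq R$ are correct and agree with the paper; likewise the reduction of ``$\supseteq$'' to $x_i^{-1}\in\mathop{\rm Im}(f)$ for $i$ with $b_{1i}\neq 0$.

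The gap is precisely where you locate it, and it is real. Your iterative cancellation relies on the claim that after the first round every residual term is either $x_i$-divisible or a $\mathop{\mathbb{ZP}}[x_2,\ldots,x_n]$-multiple of some $f(x_j')$. The first round is fine: each $k$ appearing in $m_i$ satisfies $b_{ki}>0$, so every $r<d_k$ term of $P_k$ carries a positive power of $x_i$. But from the second round on this fails. The surviving monomial can pick up indices $k$ with $b_{ki}=0$ (and $k\in J$), and then $P_k$ contains no $x_i$ at all. Concretely, take $n=4$, skew-symmetric $B$, all $d_k=1$, $b_{12}<0$, $b_{32}=2$, $b_{43}=1$, $b_{42}=0$, $b_{13}=b_{14}=0$ (so $3,4\in J$). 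Here $f(x_2')=x_2^{-1}x_3^{2}\cdot(\text{unit})$; after two rounds of your procedure one must deal with $x_2^{-1}x_3$, which is neither a polynomial nor a $\mathop{\mathbb{ZP}}[x_2,\ldots,x_4]$-multiple of $f(x_2')$, and your downward induction gives no $x_3^{-1}$ since $3\in J$. Your ``supports move strictly up'' monovariant tracks only which variables occur, not their exponents, and so cannot absorb such terms.

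The paper resolves this by strengthening the induction target rather than the monovariant. It introduces the multiplicative monoid
\[
\mathfrak{M}=\Bigg\{\prod_{k=2}^{n}M_k^{l_k}\ \Big|\ l_k\geq 0,\ \ l_j\leq\sum_{t<j}b_{jt}l_t\ \text{for }j\in J\Bigg\},\qquad M_k=x_k^{-1}\prod_{j>k}x_j^{b_{jk}},
\]
checks that each $x_i^{-1}$ with $i\notin J$ already lies in $\mathfrak{M}$, and proves $\mathfrak{M}\subseteq\mathop{\rm Im}(f)$ by induction on the total degree $\sum_k l_k$. When the largest active index $j$ lies in $J$, one multiplies $M/M_j$ by $x_j'$; the non-leading summands are handled not by $x_i$-divisibility but by producing an auxiliary $M'\in\mathfrak{M}$ (built from the \emph{smallest} $h$ with $b_{jh}l_h>0$) so that $M/M'\in\mathfrak{M}$ has strictly smaller degree and the remaining factor has all exponents $c_k\geq 0$, hence lies in $\mathop{\mathbb{ZP}}[x_2,\ldots,x_n]\subseteq\mathop{\rm Im}(f)$. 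In the example above, the problematic $x_2^{-1}x_3=M_2M_3M_4$ is an element of $\mathfrak{M}$ and is obtained as an intermediate step of this induction. Your scheme can be repaired, but only after enlarging the class of monomials you induct over to something like $\mathfrak{M}$; the paper's Remark following the lemma singles out exactly this choice of $h$ as the point where the generalized (polynomial) exchange relations force a genuinely new argument.
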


\begin{proof}
By suitable and non-trivial modifications of~the proof of~\cite[Lemma~6.6]{bfz}, we can prove the statement. By a direct calculation, for $j\in[2,n]$ one can show that
\begin{gather*}
 f(x_{j}') = \begin{cases}
 x_{j}' &\text{if}\quad b_{1j}=0,
 \\
 x_{j}^{-1}\prod\limits_{i=j+1}^{n}x_{i}^{b_{ij}}\prod\limits_{i=n+1}^{m}x_{i}^{[b_{ij}]_+}
 &\text{if} \quad b_{1j}\neq 0.
\end{cases}
\end{gather*}

The inclusion $\mathop{\rm Im}(f)\subseteq\mathop{\mathbb{ZP}}\big[x_2,\widehat{x}_{2},\ldots,x_n,\widehat{x}_{n}\big]$ is immediate. For each $j\in[2,n]$, we set
$M_j=x_{j}^{-1}\prod\limits_{i=j+1}^{n}x_{i}^{b_{ij}}.$ Let~$J=\{j\in[2,n]\,|\, b_{1j}=0\}$. In order to prove the converse inclusion, it~suf\-fi\-ces to show that $x_{j}^{-1}\in\mathop{\rm Im}(f)$ for each $j\in[2,n]-J$\!. For $l_j\in \mathbb{Z}$, we have
$\prod\limits_{j=2}^{n}(M_{j})^{l_j}=\prod\limits_{j=2}^{n}
\Big(x_{j}^{-1}\prod\limits_{i=j+1}^{n}x_{i}^{b_{ij}}\Big)^{l_j}
=\prod\limits_{j=2}^{n}x_{j}^{z_j}$ with $z_j:= {-l_j+\sum\limits_{t=2}^{j-1}b_{jt}l_{t}}$.
We define the multiplicative monoid
\begin{gather*}
\mathfrak{M}:=\Bigg\{\prod\limits_{k=2}^{n}M_{k}^{l_k}=\prod\limits_{j=2}^{n}x_{j}^{z_j} \,|\, l_k\geq0 \text{ for } k\in[2,n],\ z_j\geq0\text{ for } j \in J\Bigg\}.
\end{gather*}
Then we have that $\prod\limits_{k=2}^{n}(M_{k})^{l_k}\in\mathfrak{M}$ if and only if
\begin{gather}\label{equm}
l_k\geq0 \quad\text{for}\quad k\in[2,n]\qquad\text{and}\qquad
l_j\leq \sum\limits_{t=2}^{j-1}b_{jt}l_{t}\quad\text{for}\quad j\in J.
\end{gather}

For any $j\in [2,n]-J$, we obtain that $x_{j}^{-1}\in \mathfrak{M}$ by (\ref{equm}). Hence it is sufficient to prove that $\mathfrak{M}\subseteq \mathop{\rm Im}(f)$. Let~$M:=\prod\limits_{k=2}^{n}M_{k}^{l_k}\in\mathfrak{M}$. We will prove $M\in\mathop{\rm Im}(f)$ by induction on~\mbox{$\text{deg}(M):=\sum\limits_{k=2}^{n}l_{k}$}. When $\text{deg}(M)=0$, we have $M=1\in\mathop{\rm Im}(f)$. Suppose that $\text{deg}(M)>0$. Now we assume that each $M'\in\mathop{\rm Im}(f)$ if $M'\in\mathfrak{M}$ whose degree is no more than $\text{deg}(M)-1$.

Let~$j\in[2,n]$ be the largest integer such that $l_{j}>0$. Since $l_{j}-1 <\sum\limits_{t=2}^{j-1}b_{jt}l_{t}$ if $l_{j} \leq\sum\limits_{t=2}^{j-1}b_{jt}l_{t}$ and $l_j-1\geq0$, we have that $M/M_j\in\mathfrak{M}$. By the induction hypothesis, we obtain that $M/M_j\in\mathop{\rm Im}(f)$. If $j\notin J$, then $f(x_{j}')=M_j\prod\limits_{i=n+1}^{m}x_{i}^{[b_{ij}]_+}$, which implies $M_j\in \mathop{\rm Im}(f)$. It~fol\-lows that $M=(M/M_j) M_j\in\mathop{\rm Im}(f)$. Now assume that $j\in J$. Using the fact that $b_{lk}\geq0$ for~$1\leq k< l\leq n$, it follows that
\begin{gather*}
 x_{j}'= x_{j}^{-1}\sum\limits_{r=0}^{d_j}\rho_{j,r} \prod\limits_{i=2}^{j-1}x_{i}^{(r-d_j)\beta_{ij}} \prod\limits_{i=j+1}^{n}x_{i}^{r\beta_{ij} } \prod\limits_{i=n+1}^{m}x_{i}^{r[\beta_{ij}]_{+} +(d_j-r)[-\beta_{ij}]_+}
 \\ \hphantom{x_{j}'}
{}= M_j\prod\limits_{i=n+1}^{m}x_{i}^{[b_{ij}]_{+}} +x_{j}^{-1}\sum\limits_{r=0}^{d_j-1}\rho_{j,r} \prod\limits_{i=2}^{j-1}x_{i}^{(r-d_j)\beta_{ij}} \prod\limits_{i=j+1}^{n}x_{i}^{r\beta_{ij} } \prod\limits_{i=n+1}^{m}x_{i}^{r[\beta_{ij}]_{+} +(d_j-r)[-\beta_{ij}]_+}.
\end{gather*}
By multiplying both sides of~this equation by $M/M_j$, we have that
\begin{gather*}
(M/M_j)\, x_{j}'= M\prod\limits_{i=n+1}^{m}x_{i}^{[b_{ij}]_{+}} + \sum\limits_{r=0}^{d_j-1}\rho_{j,r} M\prod\limits_{i=2}^{n}x_{i}^{(r-d_j)\beta_{ij}} \prod\limits_{i=n+1}^{m}x_{i}^{r[\beta_{ij}]_{+} +(d_j-r)[-\beta_{ij}]_+}.
\end{gather*}
Note that $(M/M_j) x_{j}'\in\mathop{\rm Im}(f)$. If $M\prod\limits_{i=2}^{n}x_{i}^{(r-d_j)\beta_{ij}} \in \mathop{\rm Im}(f)$ for all $r\in[0,d_j-1]$, then $M\in\mathop{\rm Im}(f)$.
Thus we only need to prove that $M\prod\limits_{i=2}^{n}x_{i}^{(r-d_j)\beta_{ij}} \in \mathop{\rm Im}(f)$ for $r\in[0,d_j-1]$. Since $\prod\limits_{i=j+1}^{n}x_{i}^{r\beta_{ij}}\in \mathop{\rm Im}(f)$, it is sufficient to prove that
\begin{gather*}
M\prod\limits_{i=j+1}^{n}x_{i}^{-b_{ij}} \prod\limits_{i=2}^{j-1}x_{i}^{(r-d_j)\beta_{ij}} =(M/M_j)\Bigg(x_{j}^{-1} \prod\limits_{i=2}^{j-1}x_{i}^{(r-d_j)\beta_{ij}}\Bigg) \in \mathop{\rm Im}(f).
\end{gather*}
For simplicity, we denote $M_{j}^{-}:=x_{j}^{-1}\prod\limits_{i=2}^{j-1}x_{i}^{-b_{ij}}$. Then
\begin{gather*}
(M/M_j)\Bigg(x_{j}^{-1} \prod\limits_{i=2}^{j-1}x_{i}^{(r-d_j)\beta_{ij}}\Bigg)
=(M/M_j)M_{j}^{-}\prod\limits_{i=2}^{j-1}x_{i}^{r\beta_{ij}}.
\end{gather*}

By applying the facts that $l_j>0$ for any $j\in J$ and (\ref{equm}), we know that there exists the smallest integer $h\in[2,j-1]$ such that $b_{jh}l_h>0$ which implies $b_{jh}>0$, $l_h>0$ and $b_{jk}l_k=0$ for~$k\in[2,h-1]$. By (\ref{equm}), we know that $h\in[2,n]-J$.
Let~$M':=\prod\limits_{k=2}^{n}M_{k}^{l'_k}$ be a monomial such that
\begin{gather}\label{equalprime}
l_{k}'=
\begin{cases}
0 &\text{for}\quad 2\leq k\leq h-1,
\\%[.5ex]
1 &\text{for}\quad k=h,
\\
\min\bigg\{l_k,\sum\limits_{t=2}^{k-1}b_{kt}l_{t}'\bigg\}&\text{for}\quad h+1\leq k\leq n.
\end{cases}
\end{gather}

Note that $l'_k\geq0$ since $b_{kt}\geq0$ for $2\leq t< k\leq n$. By $h\in[2,n]-J$ and (\ref{equalprime}), we know that $l'_k\leq \sum\limits_{t=2}^{k-1}b_{kt}l_{t}'$ for $k\in J$. Hence $M'\in\mathfrak{M}$. It is clear that $\text{deg}(M')\geq1$.

In order to prove $M/M'\in\mathfrak{M}$, we need to verify that
\begin{gather}{\label{inequl}}
l'_k\leq l_k\qquad\text{for}\quad k\in[2,n]
\end{gather}
and
\begin{gather}\label{inequmprime}
{}-l'_k +\sum\limits_{t=2}^{k-1}b_{kt}l_{t}' \leq -l_k +\sum\limits_{t=2}^{k-1}b_{kt}l_{t}
\qquad\text{for}\quad k\in J.
\end{gather}
By (\ref{equalprime}) and the choice of~$h$, we have that the inequalities (\ref{inequl}) hold.

We show that the inequalities (\ref{inequmprime}) hold as follows:
\begin{itemize}\itemsep=0pt
 \item[(a)] it is clear that $-l'_k +\sum\limits_{t=2}^{k-1}b_{kt}l_{t}'=0\leq -l_k +\sum\limits_{t=2}^{k-1}b_{kt}l_{t}$ for $k\in J\cap [2,h-1]$;
 \item[(b)] when $k=h$, recall that $h\notin J$;
 \item[(c)] for $k\in[h+1,n]$, we have $l_{k}'=l_k$ or $l_{k}'=\sum\limits_{t=2}^{k-1}b_{kt}l_{t}'$. If $l_{k}'=l_k$, then the inequalities (\ref{inequmprime}) hold since $l'_t\leq l_{t}$ for $2\leq t\leq k-1$. If $l_{k}'=\sum\limits_{t=2}^{k-1}b_{kt}l_{t}'$, then $-l'_k +\sum\limits_{t=2}^{k-1}b_{kt}l_{t}' =0 \leq -l_k +\sum\limits_{t=2}^{k-1}b_{kt}l_{t}$ for $k\in J\cap[h+1,n]$. Thus we obtain the inequalities (\ref{inequmprime}).
\end{itemize}

The inequalities (\ref{inequl}) and (\ref{inequmprime}) imply that $M/M'\in\mathfrak{M}$. By (\ref{inequl}), we have that $\text{deg}(M/M')<\text{deg}(M)$. The induction hypothesis implies that $M/M'\in\mathop{\rm Im}(f)$.

Note that $(M/M_j)M_{j}^{-}\!\prod\limits_{k=2}^{j-1}\!x_{k}^{r\beta_{kj}}=(M/M')(M'M_{j}^{-}/M_j) \prod\limits_{k=2}^{j-1}\!x_{k}^{r\beta_{kj}}.$
Since we know that \mbox{$M/M'\in\mathop{\rm Im}(f)$}, it suffices to show that$(M'M_{j}^{-}/M_j) \!\prod\limits_{k=2}^{j-1}x_{k}^{r\beta_{kj}}\in\mathop{\rm Im}(f)$.

Let~$\prod\limits_{k=2}^{n}x_{k}^{c_k}=(M'M_{j}^{-}/M_j) \prod\limits_{t=2}^{j-1}x_{t}^{r\beta_{tj}}$ with $ c_k \in \mathbb{Z}$ for all $k \in [2,n]$. To prove that $\prod\limits_{k=2}^{n}x_{k}^{c_k}\in\mathop{\rm Im}(f)$, we only need to show that $c_k\geq0$ for all $k\in[2,n]$. Note that $0\leq r\leq d_{j}-1$.

By calculating the powers of~$x_k$ in $M'$, $M_{j}^{-}$, $M_j$ and $\prod\limits_{t=2}^{j-1}x_{t}^{r\beta_{tj}}$, respectively, it follows that:
\begin{itemize}\itemsep=0pt
 \item[(1)] when $k\in[2,h-1]$, we have $\beta_{kj}\leq0$ and
 \begin{gather*}
 c_k=0+(-b_{kj})-0+r\beta_{kj}=(r-d_{j})\beta_{kj}\geq 0;
 \end{gather*}
 \item[(2)] when $k=h$, since $b_{jh}>0$, we obtain $\beta_{hj}<0$. It follows that
 \begin{gather*}
 c_h=-l_{h}'+(-b_{hj})-0+r\beta_{hj}=-1+(r-d_{j})\beta_{hj}\geq -\beta_{hj}-1\geq0;
 \end{gather*}
 \item[(3)] when $k\in[h+1,j-1]$, then $\beta_{kj}\leq0$ and by (\ref{equalprime}) we have that
 \begin{gather*}
 c_k=\Bigg(-l'_k +\sum\limits_{t=2}^{k-1}b_{kt}l_{t}'\Bigg)+0-b_{kj}+r\beta_{kj} \geq(r-d_{j})\beta_{kj} \geq0;
 \end{gather*}
 \item[(4)] when $k=j$, by (\ref{equalprime}) we obtain that
 \begin{gather*}
 c_j=\Bigg(-l'_j +\sum\limits_{t=2}^{j-1}b_{jt}l_{t}'\Bigg)+(-1)-(-1)+0= -l'_j +\sum\limits_{t=2}^{j-1}b_{jt}l_{t}'\geq0;
 \end{gather*}
 \item[(5)] when $k\in[j+1,n]$, note that $l_k=0$. But $0\leq l'_k\leq l_k$ which implies $l'_k=0$. Since $\sum\limits_{t=2}^{j-1}b_{jt}l_{t}'\geq b_{jh}l_{h}'= b_{jh}>0$ and $l_j>0$, we have $l'_j>0$. It follows that
 \begin{gather*}
c_k=\Bigg(-l'_k +\sum\limits_{t=2}^{k-1}b_{kt}l_{t}'\Bigg)+0-b_{kj}+0= \sum\limits_{t=2}^{k-1}b_{kt}l_{t}' -b_{kj}\geq b_{kj}(l'_j-1)\geq0.
 \end{gather*}
\end{itemize}
Hence $c_k\geq0$ for all $k\in[2,n]$. The proof is completed.
\end{proof}

\begin{Remark}In the ordinary cluster algebras, the exchange relations are binomial relations, but the exchange relations of~the generalized cluster algebras are polynomial relations. Therefore we need more detailed discussions. For example, in the proof of~Lemma~\ref{lemim} we should choose the smallest integer $h\in[2,j-1]$ such that $b_{jh}l_h>0$ which implies $b_{jh}>0$, $l_h>0$ and $b_{jk}l_k=0$ for $k\in[2,h-1]$. The choice of~such $h$ makes the most significant difference between our proof and the proof in~\cite{bfz}.
\end{Remark}

In order to prove Theorem \ref{thmL=U} below, we need the following two lemmas.
\begin{Lemma}\label{lemzp}
Let~$\big(\widetilde{\mathbf{x}},\rho,\widetilde{B}\big)$ be acyclic and coprime.
For $j\in J=\{j\in[2,n]\,|\, b_{1j}=0\}$, if~$z\in \mathop{\mathbb{ZP}}\!\big[x^{\pm1}_2,\ldots,x^{\pm1}_n\big]$ and $zP_1\in\mathop{\mathbb{ZP}}\!\big[x^{\pm1}_{2},\ldots,x^{\pm1}_{j-1},x_j,x'_j,
x^{\pm1}_{j+1},\ldots,x^{\pm1}_{n}\big]$,
then
\begin{gather*}
z\in\mathop{\mathbb{ZP}}
\big[x^{\pm1}_{2},\ldots,x^{\pm1}_{j-1},x_j,x'_j,x^{\pm1}_{j+1},\ldots,x^{\pm1}_{n}\big].
\end{gather*}
\end{Lemma}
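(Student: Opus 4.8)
The plan is to reduce the claim, via Lemma~\ref{lemdivisible}, to a divisibility statement about $z$ and the polynomial $P_1$, and then exploit coprimality. Write $zP_1=\sum_{k=a}^{b}c_kx_j^{k}$ as a Laurent polynomial in $x_j$ with coefficients $c_k\in\mathop{\mathbb{ZP}}\big[x_2^{\pm1},\ldots,x_{j-1}^{\pm1},x_{j+1}^{\pm1},\ldots,x_n^{\pm1}\big]$. By Lemma~\ref{lemdivisible} (applied with the roles of $x_1$ and $x_j$ interchanged — the lemma holds verbatim for any index), the hypothesis $zP_1\in\mathop{\mathbb{ZP}}\big[x_2^{\pm1},\ldots,x_{j-1}^{\pm1},x_j,x_j',x_{j+1}^{\pm1},\ldots,x_n^{\pm1}\big]$ is equivalent to saying that for each $k<0$ the coefficient $c_k$ is divisible by $P_j^{|k|}$ in $\mathop{\mathbb{ZP}}\big[x_2^{\pm1},\ldots,x_{j-1}^{\pm1},x_{j+1}^{\pm1},\ldots,x_n^{\pm1}\big]$. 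Correspondingly, writing $z=\sum_{k=a}^{b}e_kx_j^{k}$ with $e_k\in\mathop{\mathbb{ZP}}\big[x_2^{\pm1},\ldots,x_{j-1}^{\pm1},x_{j+1}^{\pm1},\ldots,x_n^{\pm1}\big]$, we have $c_k=e_kP_1$ for every $k$, since $j\in J$ means $b_{1j}=0$, so $P_1$ involves no $x_j$ and the multiplication by $P_1$ is performed coefficientwise in $x_j$. Thus the goal becomes: for each $k<0$, from $P_j^{|k|}\mid e_kP_1$ in the Laurent polynomial ring $R:=\mathop{\mathbb{ZP}}\big[x_2^{\pm1},\ldots,x_{j-1}^{\pm1},x_{j+1}^{\pm1},\ldots,x_n^{\pm1}\big]$, deduce $P_j^{|k|}\mid e_k$ in $R$; then another application of Lemma~\ref{lemdivisible} gives $z\in\mathop{\mathbb{ZP}}\big[x_2^{\pm1},\ldots,x_{j-1}^{\pm1},x_j,x_j',x_{j+1}^{\pm1},\ldots,x_n^{\pm1}\big]$, as desired.

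To carry out the divisibility cancellation, first observe that $R$ is a localization of the polynomial ring $\mathop{\mathbb{ZP}}[x_2,\ldots,x_{j-1},x_{j+1},\ldots,x_n]$ at the multiplicative set generated by the variables, hence a UFD (the coefficient ring $\mathop{\mathbb{ZP}}$ is itself a Laurent polynomial ring over $\mathbb Z$, so a UFD, and inverting the $x_i$ preserves this). In a UFD, $P_j^{|k|}\mid e_kP_1$ together with $\gcd(P_j,P_1)=1$ forces $P_j^{|k|}\mid e_k$: indeed each irreducible factor of $P_j$ divides $P_j$ but not $P_1$ (by coprimality of the seed, $P_1$ and $P_j$ share no common non-unit factor), so all of its multiplicity in $P_j^{|k|}$ must be absorbed by $e_k$. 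One subtlety to address: coprimality of $\big(\widetilde{\mathbf{x}},\rho,\widetilde{B}\big)$ is defined as coprimality of $P_1,P_j$ inside $\mathop{\mathbb{ZP}}[x_1,\ldots,x_{i-1},x_{i+1},\ldots,x_n]$, whereas we need it in the localized, $x_j$-excluded ring $R$; since $P_1$ does not involve $x_j$ at all and $P_j$ does not involve $x_j$, both lie in $\mathop{\mathbb{ZP}}[x_2,\ldots,x_{j-1},x_{j+1},\ldots,x_n]$, and coprimality there is preserved under the localization $R$ and is implied by (indeed equivalent to, after discarding the irrelevant variables) coprimality in the larger polynomial ring. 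I would spell out this bookkeeping carefully.

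The main obstacle, then, is not the UFD cancellation itself but making precise the claim that the seed's coprimality hypothesis, stated in one polynomial ring, transfers to the ring $R$ where we need it — in particular checking that passing to $j\in J$ (so that $x_j\nmid P_1$) is exactly what lets the factor $x_j^{k}$ on both sides be matched coefficientwise, and that no $x_j$-dependence sneaks into $P_1$ through the strings $\rho_{1,r}$ (it cannot, since those are monomials in the frozen variables only). Once that is in place, everything reduces to: $z$ and $zP_1$ have the same $x_j$-support, corresponding coefficients differ by the factor $P_1$, and the divisibility by powers of $P_j$ required by Lemma~\ref{lemdivisible} descends from $zP_1$ to $z$ by unique factorization. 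I would therefore structure the write-up as (i) reduce to coefficientwise statement in $x_j$; (ii) invoke Lemma~\ref{lemdivisible} on $zP_1$; (iii) cancel $P_1$ using the UFD property and coprimality; (iv) re-assemble $z$ and invoke Lemma~\ref{lemdivisible} in the reverse direction.
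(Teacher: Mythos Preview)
Your proposal is correct and follows essentially the same route as the paper's proof: expand in powers of $x_j$, use $b_{1j}=0$ (hence $b_{j1}=0$ by skew-symmetrizability) to see that $P_1$ is free of $x_j$ so the multiplication by $P_1$ is coefficientwise, apply Lemma~\ref{lemdivisible} at the index $j$ to translate membership in $\mathop{\mathbb{ZP}}[\ldots,x_j,x'_j,\ldots]$ into divisibility of negative-degree coefficients by powers of $P_j$, cancel $P_1$ using coprimality, and conclude. The paper's write-up is terser (it does not spell out the UFD bookkeeping or the transfer of coprimality to the localized ring that you carefully flag), but the skeleton is identical.
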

\begin{proof}
For each $j\in J$, we can write the element $z$ in the form $z=\sum\limits_{k=a_j}^{b_j}c_{j,k}x^{k}_j$ with $c_{j,k}\in \mathop{\mathbb{ZP}}\!\big[x^{\pm1}_{2},\ldots,x^{\pm1}_{j-1},x^{\pm1}_{j+1},\ldots,x^{\pm1}_{n}\big]$ and $c_{j,a_j}\neq0$.
If $a_j\geq0$, then we have that
\begin{gather*}
z\in \mathop{\mathbb{ZP}}\!\big[x^{\pm1}_{2},\ldots, x^{\pm1}_{j-1},x_j,x^{\pm1}_{j+1},\ldots,x^{\pm1}_{n}\big].
\end{gather*}
Now assume that $a_j<0$. The fact $zP_1\in\mathop{\rm Im}(f)$ implies that
\begin{gather*}
c_kx_{j}^{k}P_1= c_kP^{-|k|}_j(x'_j)^{|k|}P_1\in \mathop{\mathbb{ZP}}\!\big[x^{\pm1}_{2},\ldots,x^{\pm1}_{j-1},x_j,x'_j,x^{\pm1}_{j+1},\ldots,x^{\pm1}_{n}\big]
\end{gather*}
for $k<0$. Since $P_1$ and $P_j$ are coprime, we have that
\begin{gather*}
c_kP^{-|k|}_j\in \mathop{\mathbb{ZP}}\!\big[x^{\pm1}_{2},\ldots,x^{\pm1}_{j-1},x_j,x'_j,x^{\pm1}_{j+1},\ldots,x^{\pm1}_{n}\big].
\end{gather*}
The proof is completed.
\end{proof}

The following result shows that the upper bound of~the generalized cluster algebra of~rank 2 is equal to the corresponding lower bound.
\begin{Lemma}\label{lemn=2}
Suppose that $n=2$. If the generalized seed $\big(\widetilde{\mathbf{x}},\rho,\widetilde{B}\big)$ is coprime, then
\begin{gather}\label{n=2}
\mathop{\mathbb{ZP}}\!\big[x^{\pm1}_1,x_{2},x_{2}'\big]\cap\mathop{\mathbb{ZP}}\!\big[x_{1},x_{1}',x^{\pm1}_2\big] =\mathop{\mathbb{ZP}}[x_1,x_{1}',x_{2},x_{2}'].
\end{gather}
\end{Lemma}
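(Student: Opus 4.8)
The plan is to mimic the rank-$2$ argument in \cite{bfz} but to keep track of the extra multinomial terms coming from the string $\rho_2$. Write a typical element $y$ of the left-hand side as a Laurent polynomial in $x_1$ with coefficients in $\mathop{\mathbb{ZP}}\big[x_2^{\pm1}\big]$; since $y\in\mathop{\mathbb{ZP}}\big[x_1,x_1',x_2^{\pm1}\big]$, Lemma~\ref{lemdivisible} tells us that $y=\sum_{k=a}^{b}c_kx_1^{k}$ with each $c_k\in\mathop{\mathbb{ZP}}\big[x_2^{\pm1}\big]$ and, when $a<0$, with $c_k$ divisible by $P_1^{|k|}$ in $\mathop{\mathbb{ZP}}\big[x_2^{\pm1}\big]$ for every negative $k$. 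The inclusion ``$\supseteq$'' is trivial, so everything rests on showing that this divisibility, together with membership in $\mathop{\mathbb{ZP}}\big[x_1^{\pm1},x_2,x_2'\big]$, forces $y\in\mathop{\mathbb{ZP}}[x_1,x_1',x_2,x_2']$, i.e.\ that for each $k$ the coefficient $c_k$ actually lies in $\mathop{\mathbb{ZP}}[x_2,x_2']$, not merely in $\mathop{\mathbb{ZP}}\big[x_2^{\pm1}\big]$.

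First I would handle the ``$x_2$-direction'' symmetrically: applying Lemma~\ref{lemdivisible} with the roles of $x_1$ and $x_2$ interchanged (which is legitimate since $n=2$ and the definitions are symmetric in the two exchangeable indices), expand $y=\sum_{l=c}^{d}e_lx_2^{l}$ with $e_l\in\mathop{\mathbb{ZP}}\big[x_1^{\pm1}\big]$ and $e_l$ divisible by $P_2^{|l|}$ for $l<0$. So it remains to upgrade the coefficients $c_k$ (for $k\ge 0$ the claim is already the content of Lemma~\ref{lemdivisible}(1) once we also know the negative $x_2$-powers are controlled, and for $k<0$ we have $c_k=c_k'P_1^{|k|}$ with $c_k'\in\mathop{\mathbb{ZP}}\big[x_2^{\pm1}\big]$). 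The key step is to take a monomial $x_2^{-N}$ ($N>0$) occurring in some $c_k$ with $k<0$ and derive a contradiction unless it is absorbed into a power of $x_2'=x_2^{-1}P_2$. Here is where coprimality of the seed enters exactly as in Lemma~\ref{lemzp}: the term $c_kx_1^{k}=c_k'P_1^{|k|}x_1^{k}=c_k'(x_1')^{|k|}$ lies in $\mathop{\mathbb{ZP}}\big[x_1^{\pm1},x_2,x_2'\big]$, and writing $c_k'=\sum_l c_{k,l}'x_2^{l}$ with $c_{k,l}'\in\mathop{\mathbb{ZP}}$, the negative powers of $x_2$ must be supplied by powers of $P_2$; since $P_1$ and $P_2$ are coprime in $\mathop{\mathbb{ZP}}[x_2]$, no cancellation between the $P_1$-factor and the required $P_2$-factor is possible, so $c_{k,l}'$ with $l<0$ is divisible by $P_2^{|l|}$, i.e.\ $c_k'\in\mathop{\mathbb{ZP}}[x_2,x_2']$ and hence $c_k\in\mathop{\mathbb{ZP}}[x_2,x_2']$. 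This is precisely Lemma~\ref{lemzp} applied with $j=2$ (note $b_{12}=0$ is automatic when $n=2$ after the acyclic normalization, or more directly one checks the argument needs only coprimality).

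The remaining cases $k\ge 0$: here $c_k\in\mathop{\mathbb{ZP}}\big[x_2^{\pm1}\big]$ and we must show $c_k\in\mathop{\mathbb{ZP}}[x_2,x_2']$; equivalently, that any negative power of $x_2$ in $c_k$ is carried by $P_2$. For this one collects, for fixed $x_1$-degree $k\ge 0$, the contribution of $y$ in $\mathop{\mathbb{ZP}}\big[x_1^{\pm1},x_2,x_2'\big]$ and runs the one-variable divisibility test of Lemma~\ref{lemdivisible}(2) in the variable $x_2$ term by term; since the $x_1$-powers $x_1^{k}$ are genuine (non-negative, hence already in $\mathop{\mathbb{ZP}}[x_1]$) they cannot interfere, and the test reduces to: each negative-$x_2$-degree coefficient of $c_k$ is divisible by the appropriate power of $P_2$ in $\mathop{\mathbb{ZP}}[x_1^{\pm1}]$; because $c_k$ has no $x_1$ at all, this divisibility already holds in $\mathop{\mathbb{ZP}}$, giving $c_k\in\mathop{\mathbb{ZP}}[x_2,x_2']$. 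Putting the two ranges together yields $y\in\mathop{\mathbb{ZP}}[x_1,x_1',x_2,x_2']$, proving ``$\subseteq$'' and hence \eqref{n=2}.

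The main obstacle I anticipate is the bookkeeping in the coprimality step: in the ordinary case $P_1,P_2$ are binomials and the divisibility arguments are almost formal, but here $P_i=\sum_{r=0}^{d_i}\rho_{i,r}\prod_j x_j^{\dots}$ is a genuine multinomial, so one must argue carefully that ``$P_1^{|k|}c_k'\in\mathop{\mathbb{ZP}}[x_1^{\pm1},x_2,x_2']$ and $\gcd(P_1,P_2)=1$'' really does force $P_2^{|l|}\mid c_{k,l}'$, keeping in mind that the ambient ring is a Laurent ring $\mathop{\mathbb{ZP}}\big[x_2^{\pm1}\big]$ (a localization of a polynomial ring over $\mathop{\mathbb{ZP}}$, which is a UFD provided $\mathop{\mathbb{ZP}}$ is, as it is here being a Laurent polynomial ring over $\mathbb Z$). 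Once one is comfortable that unique factorization and coprimality survive this localization — which they do — the argument is the same as in \cite{bfz}, and indeed Lemma~\ref{lemzp} was isolated precisely so that this step can be invoked as a black box.
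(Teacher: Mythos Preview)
Your argument has a genuine gap when $b_{12}\neq 0$. First, the parenthetical claim that ``$b_{12}=0$ is automatic when $n=2$ after the acyclic normalization'' is false: acyclicity only gives $b_{21}\geq 0$, hence $b_{12}\leq 0$, and $b_{12}<0$ is certainly allowed. When $b_{12}\neq 0$, the polynomial $P_2=x_2x_2'$ genuinely involves $x_1$, so $x_2'\notin\mathop{\mathbb{ZP}}\big[x_2^{\pm1}\big]$ and your stated goal ``$c_k\in\mathop{\mathbb{ZP}}[x_2,x_2']$'' is not even a condition on an element of $\mathop{\mathbb{ZP}}\big[x_2^{\pm1}\big]$. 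More seriously, the step ``the term $c_kx_1^{k}=c_k'(x_1')^{|k|}$ lies in $\mathop{\mathbb{ZP}}\big[x_1^{\pm1},x_2,x_2'\big]$'' is unjustified and in fact false in general: membership of the \emph{sum} $y$ in $\mathop{\mathbb{ZP}}\big[x_1^{\pm1},x_2,x_2'\big]$ does not pass to the individual $x_1$-graded pieces $c_kx_1^{k}$, precisely because $x_2'$ mixes $x_1$-degrees. For instance, if $P_2=x_1+1$ then $y=x_2'=x_1x_2^{-1}+x_2^{-1}$ has $c_0=x_2^{-1}$, and $c_0x_1^{0}=x_2^{-1}\notin\mathop{\mathbb{ZP}}\big[x_1^{\pm1},x_2,x_2'\big]$. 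The same obstruction kills your ``$k\geq 0$'' paragraph: you cannot isolate a fixed $x_1$-degree contribution of $y$ inside $\mathop{\mathbb{ZP}}\big[x_1^{\pm1},x_2,x_2'\big]$.

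The paper avoids this coupling by a different device. Instead of expanding with coefficients in $\mathop{\mathbb{ZP}}\big[x_2^{\pm1}\big]$, it expands $y=\sum_{k=a}^{b}c_kx_1^{k}$ with $c_k\in\mathop{\mathbb{ZP}}\nolimits^{\rm st}[x_2,x_2']$ (standard monomials), and then works only with the \emph{leading term} $\mathop{\rm LT}(y)=f(c_a)x_1^{a}$, where $f$ is the ``set $x_1=0$'' map of Lemma~\ref{lemdirectsum}. Lemma~\ref{lemdivisible} gives $f(c_a)=zP_1^{-a}$ with $z\in\mathop{\mathbb{ZP}}\big[x_2^{\pm1}\big]$; then Lemma~\ref{lemim} identifies $\mathop{\rm Im}(f)$ (treating the two cases $b_{12}=0$ and $b_{12}\neq 0$ separately), and coprimality via Lemma~\ref{lemzp} forces $z\in\mathop{\rm Im}(f)$. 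Choosing a preimage $z'$ and setting $y'=z'(x_1')^{-a}\in\mathop{\mathbb{ZP}}[x_1,x_1',x_2,x_2']$, one gets $\mathop{\rm LT}(y)=\mathop{\rm LT}(y')$, so $y-y'$ has strictly larger minimal $x_1$-power, and an induction on $-a$ finishes. The essential point you are missing is this leading-term induction, which handles exactly the $x_1$/$x_2'$ entanglement that breaks the term-by-term strategy.
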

\begin{proof}
Obviously, we have that $\mathop{\mathbb{ZP}}\!\big[x^{\pm1}_1,x_{2},x_{2}'\big]\cap\mathop{\mathbb{ZP}}\!\big[x_{1},x_{1}',x^{\pm1}_2\big] \supseteq\mathop{\mathbb{ZP}}[x_1,x_{1}',x_{2},x_{2}']$. Let~$y\in\mathop{\mathbb{ZP}}\!\big[x^{\pm1}_1,x_{2},x_{2}'\big]\cap\mathop{\mathbb{ZP}}\!\big[x_{1},x_{1}',x^{\pm1}_2\big]$. Note that the element $y$ can be written as $y=\sum\limits_{k=a}^{b}c_kx_{1}^{k}$ with $c_k\in\mathop{\mathbb{ZP}}\nolimits^{\rm st}[x_2,x'_2]$ and $c_a\neq0$. By Lemma~\ref{lemdirectsum} (3), if the leading term $\mathop{\rm LT}(y)=f(c_a)x^{a}_1\in\mathop{\mathbb{ZP}}\!\big[x_{1},x^{\pm1}_2\big]$, i.e., $a > 0$, then $y\in\mathop{\mathbb{ZP}}[x_1,x_{2},x_{2}']$.

Suppose that $a\leq0$. We will prove ($\ref{n=2}$) by induction on $-a$. It is enough to find an~element $y'\in\mathop{\mathbb{ZP}}[x_1,x'_1,x_2,x'_2]$ such that $\mathop{\rm LT}(y)=\mathop{\rm LT}(y')$. By Lemma~\ref{lemdivisible}, we have $f(c_a)P^{a}_1\in\mathop{\mathbb{ZP}}\big[x^{\pm1}_2\big]$, i.e., there exists some element $z\in\mathop{\mathbb{ZP}}\big[x^{\pm1}_2\big]$ such that $f(c_a)=zP^{-a}_1$. By Lemma~\ref{lemim}, if $b_{12}=0$ then $z\in\mathop{\mathbb{ZP}}\big[x^{\pm1}_2\big]=\mathop{\rm Im}(f)$ and if $b_{12}\neq0$ then $\mathop{\rm Im}(f)=\mathop{\mathbb{ZP}}[x_2,x'_2]$. Note that $P_1$ and $P_2$ are coprime. By applying Lemma~\ref{lemzp} repeatedly, we conclude that $z\in\mathop{\mathbb{ZP}}[x_2,x'_2]=\mathop{\rm Im}(f)$. It follows that there exists some element $z'\in \mathop{\mathbb{ZP}}[x_2,x'_2]$ such that $z=f(z')$. Let~$y'=z'(x'_1)^{-a}$, then we have that $y'\in\mathop{\mathbb{ZP}}[x_1,x'_1,x_2,x'_2]$ and
$
\mathop{\rm LT}(y')=f(z')P^{-a}_1x^{a}_1=zP^{-a}_1x^{a}_1=f(c_a)x^{a}_1=\mathop{\rm LT}(y).
$
Hence the absolute value of~the power of~$x_1$ in $y-y'$ is strictly less than $-a$. We see that $y-y'\in \mathop{\mathbb{ZP}}[x_1,x'_1,x_2,x'_2]$ by the induction hypothesis. Therefore $y\in \mathop{\mathbb{ZP}}[x_1,x'_1,x_2,x'_2]$, as~desired. The proof is completed.
\end{proof}

\begin{Theorem}\label{thmL=U}
If the generalized seed $\big(\widetilde{\mathbf{x}},\rho,\widetilde{B}\big)$ is coprime and acyclic, then
\begin{gather*}
\mathcal{L}\big(\widetilde{\mathbf{x}},\rho,\widetilde{B}\big)=\mathcal{U}\big(\widetilde{\mathbf{x}},\rho,\widetilde{B}\big).
\end{gather*}
\end{Theorem}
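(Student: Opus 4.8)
The inclusions $\mathcal{L}(\widetilde{\mathbf{x}},\rho,\widetilde{B})\subseteq\mathcal{A}(\widetilde{\mathbf{x}},\rho,\widetilde{B})\subseteq\mathcal{U}(\widetilde{\mathbf{x}},\rho,\widetilde{B})$ are already available, so the plan is to prove the reverse inclusion $\mathcal{U}\subseteq\mathcal{L}$ by induction on the rank~$n$, following the line of~\cite{bfz}. The base cases are $n=1$, where $\mathcal{U}=\mathcal{L}$ holds tautologically by Lemma~\ref{lemup}, and $n=2$, where every seed is automatically acyclic and Lemma~\ref{lemn=2} combined with Lemma~\ref{lemup} already gives the claim. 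For $n\geq3$, after the normalization $b_{lk}\geq0$ for $1\leq k<l\leq n$, I would pass to the freezing $\big(\widetilde{\mathbf{x}}^{\dag},\rho^{\dag},\widetilde{B}^{\dag}\big)$ of the seed at~$x_1$: it has ground ring $\mathop{\mathbb{ZP}}\nolimits^{\dag}=\mathop{\mathbb{ZP}}[x_1^{\pm1}]$, rank $n-1$, its directed graph is obtained from $\Gamma\big(\widetilde{\mathbf{x}},\rho,\widetilde{B}\big)$ by deleting the vertex~$1$ (hence it is acyclic), and its exchange polynomials $P_i^{\dag}=P_i$ for $i\in[2,n]$ remain pairwise coprime, coprimality being inherited by the localization $\mathop{\mathbb{ZP}}[x_1^{\pm1},x_2,\ldots,x_n]$ of $\mathop{\mathbb{ZP}}[x_1,\ldots,x_n]$. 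The induction hypothesis together with Lemma~\ref{lemup} then gives
\begin{gather*}
\mathcal{U}\big(\widetilde{\mathbf{x}}^{\dag},\rho^{\dag},\widetilde{B}^{\dag}\big)=\mathcal{L}\big(\widetilde{\mathbf{x}}^{\dag},\rho^{\dag},\widetilde{B}^{\dag}\big)=\mathop{\mathbb{ZP}}\!\big[x_1^{\pm1},x_2,x_2',\ldots,x_n,x_n'\big],
\end{gather*}
and since freezing at~$x_1$ changes none of the~$x_i'$, the intersectands with index $i\geq2$ in the description of~$\mathcal{U}$ furnished by Lemma~\ref{lemup} are exactly those of $\mathcal{U}\big(\widetilde{\mathbf{x}}^{\dag},\rho^{\dag},\widetilde{B}^{\dag}\big)$, so $\mathcal{U}\subseteq\mathcal{U}\big(\widetilde{\mathbf{x}}^{\dag},\rho^{\dag},\widetilde{B}^{\dag}\big)$. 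Hence every $y\in\mathcal{U}$ lies in $\mathop{\mathbb{ZP}}[x_1^{\pm1},x_2,x_2',\ldots,x_n,x_n']$, and using the relations $x_ix_i'=P_i$ repeatedly I may write $y=\sum_{k=a}^{b}c_kx_1^{k}$ with $c_k\in\mathop{\mathbb{ZP}}\nolimits^{\rm st}[x_2,x_2',\ldots,x_n,x_n']$ and $c_a\neq0$.

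From here the argument is a descent on $-a$, carried out exactly as in the proof of Lemma~\ref{lemn=2} but with $\mathop{\mathbb{ZP}}\nolimits^{\rm st}[x_2,x_2',\ldots,x_n,x_n']$ replacing $\mathop{\mathbb{ZP}}\nolimits^{\rm st}[x_2,x_2']$. If $a\geq0$ then $y\in\mathop{\mathbb{ZP}}[x_1,x_2,x_2',\ldots,x_n,x_n']\subseteq\mathcal{L}$ and we are done. If $a<0$, the aim is to produce $y'\in\mathcal{L}$ with $\mathop{\rm LT}(y')=\mathop{\rm LT}(y)$ (leading term with respect to~$x_1$); then $y-y'\in\mathcal{U}$ has strictly larger lowest $x_1$-power, so the descent hypothesis yields $y-y'\in\mathcal{L}$, whence $y\in\mathcal{L}$. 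To build~$y'$: Lemma~\ref{lemdirectsum}(2) gives $\mathop{\rm LT}(y)=f(c_a)x_1^{a}$; the membership $y\in\mathop{\mathbb{ZP}}[x_1,x_1',x_2^{\pm1},\ldots,x_n^{\pm1}]$ (the $i=1$ intersectand of~$\mathcal{U}$) and Lemma~\ref{lemdivisible}(2) show that $P_1^{-a}$ divides $f(c_a)$ in $\mathop{\mathbb{ZP}}[x_2^{\pm1},\ldots,x_n^{\pm1}]$, say $f(c_a)=zP_1^{-a}$. Granting for the moment that $z\in\mathop{\rm Im}(f)$, I take the standard element $z'$ with $f(z')=z$ (unique by Lemma~\ref{lemdirectsum}(1)) and set $y'=z'(x_1')^{-a}$; then $y'\in\mathcal{L}$, and since $P_1\in\mathop{\mathbb{ZP}}[x_2,\ldots,x_n]$ contains no~$x_1$ (because $b_{11}=0$), expanding $y'=z'x_1^{a}P_1^{-a}$ in powers of~$x_1$ and extracting the lowest one gives $\mathop{\rm LT}(y')=f(z')P_1^{-a}x_1^{a}=zP_1^{-a}x_1^{a}=f(c_a)x_1^{a}=\mathop{\rm LT}(y)$, as required.

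The hard part is precisely the claim left open above, that $zP_1^{-a}\in\mathop{\rm Im}(f)$ implies $z\in\mathop{\rm Im}(f)$; this is where acyclicity and coprimality both enter. By Lemma~\ref{lemim} (whose proof uses acyclicity through $b_{lk}\geq0$ for $k<l$), $\mathop{\rm Im}(f)=\mathop{\mathbb{ZP}}[x_2,\widehat{x}_2,\ldots,x_n,\widehat{x}_n]$, which is contained in $\mathop{\mathbb{ZP}}[x_2^{\pm1},\ldots,x_{j-1}^{\pm1},x_j,x_j',x_{j+1}^{\pm1},\ldots,x_n^{\pm1}]$ for each $j\in J=\{j\in[2,n]\,|\,b_{1j}=0\}$ (for $j\notin J$ one has $x_j^{\pm1}\in\mathop{\rm Im}(f)$, so no divisibility condition is at stake), and in fact equals the intersection of these rings over $j\in J$. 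Peeling off one factor of~$P_1$ at a time, I would invoke the coprimality of~$P_1$ with each $P_j$, $j\in J$, and Lemma~\ref{lemzp} to keep the quotient inside each of these rings, hence inside $\mathop{\rm Im}(f)$; iterating $-a$ times returns $z\in\mathop{\rm Im}(f)$, just as in the closing lines of the proof of Lemma~\ref{lemn=2}. The remaining points — that freezing is compatible with mutation so that the~$x_i'$, the~$P_i$, and $\mathop{\rm Im}(f)$ are the expected objects, and that the two expansions of~$y$ have the same lowest $x_1$-power — are routine given the lemmas above, and complete the induction.
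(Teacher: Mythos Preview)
Your outline matches the paper's proof almost line for line, but there is one step you treat as a given that is in fact the crux of the inductive argument. You assert that $\mathop{\rm Im}(f)=\mathop{\mathbb{ZP}}[x_2,\widehat{x}_2,\ldots,x_n,\widehat{x}_n]$ ``in fact equals'' the intersection $\bigcap_{j\in J}\mathop{\mathbb{ZP}}\big[x_2^{\pm1},\ldots,x_{j-1}^{\pm1},x_j,x_j',x_{j+1}^{\pm1},\ldots,x_n^{\pm1}\big]$, and your peeling argument via Lemma~\ref{lemzp} only lands $z$ in that intersection. But Lemma~\ref{lemim} gives only the forward inclusion $\mathop{\rm Im}(f)\subseteq\bigcap_{j\in J}(\cdots)$; the reverse inclusion is not a consequence of any of the preparatory lemmas. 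It is precisely the statement $\mathcal{L}=\mathcal{U}$ for the rank-$|J|$ generalized seed obtained from $\big(\widetilde{\mathbf{x}},\rho,\widetilde{B}\big)$ by freezing at $\{x_j\mid j\in[2,n]-J\}$ and then removing~$x_1$, over the ground ring $\mathop{\mathbb{ZP}}\big[x_j^{\pm1}\mid j\in[2,n]-J\big]$. This smaller seed is again acyclic and coprime, and since $|J|\le n-1$, the paper closes the loop by a \emph{second} invocation of the induction hypothesis at this point.

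In the proof of Lemma~\ref{lemn=2} this step is invisible because $|J|\le 1$ there, so the intersection reduces to a single ring (or the full Laurent ring) and the equality is immediate; for $n\ge 3$ it is the essential non-routine ingredient, and should be flagged explicitly rather than absorbed into the ``routine'' verifications at the end.
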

\begin{proof}
Let~$n$ be the rank of~$\mathcal{A}\big(\widetilde{\mathbf{x}},\rho,\widetilde{B}\big)$. We prove the statement by induction on $n$. We denote $\smash{\mathcal{U}:=\mathcal{U}\big(\widetilde{\mathbf{x}},\rho,\widetilde{B}\big)}$ and $\smash{\mathcal{L}:=\mathcal{L}\big(\widetilde{\mathbf{x}},\rho,\widetilde{B}\big)}$ for simplification purposes. When $n=1$, by using (\ref{equxxprime}), we have $\mathcal{L} =\mathcal{U}$. When $n=2$, the statement follows by Lemmas~\ref{lemup} and~\ref{lemn=2}. Now let $n\geq3$ and assume that the upper bound coincides with the lower bound if the rank of~the acyclic and coprime generalized cluster algebra is less than $n$. By Lemma~\ref{lemup}, we have that
\begin{gather*}
\mathcal{U}=\mathop{\mathbb{ZP}}\!\big[x_1,x'_1,x^{\pm1}_2,\ldots,x^{\pm1}_n\big]\cap\bigcap\limits_{i=2}^{n} \mathop{\mathbb{ZP}}\!\big[x^{\pm1}_1,\ldots,x^{\pm1}_{i-1},x_i,x'_i,x^{\pm1}_{i+1},\ldots,x^{\pm1}_n\big].
\end{gather*}
Let~the generalized seed $\big(\widetilde{\mathbf{x}}^{\dag},\rho^{\dag},\widetilde{B}^{\dag}\big)$ be the freezing of~$\big(\widetilde{\mathbf{x}},\rho,\widetilde{B}\big)$ at $x_1$. It follows that $\mathop{\mathbb{ZP}}\nolimits^{\dag}=\mathop{\mathbb{ZP}}\!\big[x^{\pm1}_1\big]$. For $i\in\big[2,n\big]$, we have that
\begin{gather*}
\mathop{\mathbb{ZP}}\nolimits^{\dag}\big[x^{\pm1}_2,\ldots,x^{\pm1}_{i-1},x_i,x'_i,x^{\pm1}_{i+1},\ldots,x^{\pm1}_n\big] =\mathop{\mathbb{ZP}}\!\big[x^{\pm1}_1,\ldots,x^{\pm1}_{i-1},x_i,x'_i,x^{\pm1}_{i+1},\ldots,x^{\pm1}_n\big].
\end{gather*}
By the induction hypothesis, we obtain that
\begin{gather*}
\bigcap\limits_{i=2}^{n} \mathop{\mathbb{ZP}}\nolimits^{\dag} \big[x^{\pm1}_2\ldots,x^{\pm1}_{i-1},x_i,x'_i,x^{\pm1}_{i+1},\ldots,x^{\pm1}_n\big] =\mathop{\mathbb{ZP}}\nolimits^{\dag}\big[x_2,x'_2,\ldots,x_n,x'_n\big].
\end{gather*}
Thus it is enough to prove that
\begin{gather}\label{casen}
\mathop{\mathbb{ZP}}\!\big[x^{\pm1}_1,x_{2},x_{2}',\ldots,x_{n},x_{n}'\big] \cap\mathop{\mathbb{ZP}}\!\big[x_{1},x_{1}',x^{\pm1}_2,\ldots,x^{\pm1}_n\big] =\mathop{\mathbb{ZP}}[x_1,x_{1}',\ldots,x_{n},x_{n}'].
\end{gather}
The proof of~(\ref{casen}) is quite similar to that given earlier for (\ref{n=2}). It is easy to see that the ``$\supseteq$" part holds. We only need to show the ``$\subseteq$'' part. Assume{\samepage
\begin{gather*}
y\in \mathop{\mathbb{ZP}}\!\big[x^{\pm1}_1,x_{2},x_{2}',\ldots,x_{n},x_{n}'\big] \cap\mathop{\mathbb{ZP}}\!\big[x_{1},x_{1}',x^{\pm1}_2,\ldots,x^{\pm1}_n\big].
\end{gather*}
We can write $y=\sum\limits_{k=a}^{b}c_kx_{1}^{k}$ with $c_k\in\mathop{\mathbb{ZP}}\nolimits^{\rm st} [x_2,x'_2,\ldots,x_n,x'_n]$ and $c_a\neq0$.}

If $\mathop{\rm LT}(y)\in\mathop{\mathbb{ZP}}\big[x_{1},x^{\pm1}_2,\ldots,x^{\pm1}_n\big]$,
we have $y\in\mathop{\mathbb{ZP}}[x_1,x_{2},x_{2}',\ldots,x_{n},x_{n}']$ by Lemma~\ref{lemdirectsum}(3) and the statement is true. Hence we can assume that $a<0$. We prove (\ref{casen}) by induction on $|a|$. It suffices to prove that there exists some element $y_1\in\mathop{\mathbb{ZP}}\!\big[x_1,x_{1}',x_{2},x_{2}',\ldots,x_{n},x_{n}'\big]$ such that $\mathop{\rm LT}(y)=\mathop{\rm LT}(y_1)$. Set $z=f(c_a)P_{1}^{a}$. By Lemma~\ref{lemdivisible}, we have that $z=f(c_a)P_{1}^{a}\in\mathop{\mathbb{ZP}}\!\big[x^{\pm1}_2,\ldots,x^{\pm1}_n\big]$. Note that we have $zP^{-a}_1\in\mathop{\rm Im}(f)$.
We claim that $z\in \mathop{\rm Im}(f)$. By Lemma~\ref{lemim}, if $J=\varnothing$, then the claim is true. Suppose that $J\neq\varnothing$. By applying Lemma~\ref{lemzp} repeatedly, we~conclude that
$z\in\bigcap\limits_{j\in J} \mathop{\mathbb{ZP}}\!\big[x^{\pm1}_{2},\ldots,x^{\pm1}_{j-1},x_{j},x'_j,x^{\pm1}_{j+1}, \ldots,x^{\pm1}_{n}\big].$
It remains to show that
\begin{gather*}
\bigcap\limits_{j\in J} \mathop{\mathbb{ZP}}\!\big[x^{\pm1}_{2},\ldots,x^{\pm1}_{j-1},x_{j},x'_j,x^{\pm1}_{j+1},\ldots,x^{\pm1}_{n}\big] =\mathop{\rm Im}(f).
\end{gather*}
We obtain the generalized seed $\big(\widetilde{\mathbf{y}},\varepsilon,\widetilde{A}\big)$ from $\big(\widetilde{\mathbf{x}},\rho,\widetilde{B}\big)$ through freezing at $\{x_j\,|\, j\in [2,n]-J\}$ and then removing the cluster variable $x_1$. The group ring of~coefficients of~$\smash{\mathcal{A}\big(\widetilde{\mathbf{y}},\varepsilon,\widetilde{A}\big)}$ is
$\mathop{\mathbb{ZP}}\!\big[x^{\pm1}_j\,|\, j\in[2,n]-J\big]$.
By the induction hypothesis, we have that $\smash{\mathcal{L}\big(\widetilde{\mathbf{y}},\varepsilon,\widetilde{A}\big) =\mathcal{U}\big(\widetilde{\mathbf{y}},\varepsilon,\widetilde{A}\big)}$, namely
\begin{gather*}
\bigcap\limits_{j\in J}\mathop{\mathbb{ZP}}\big[x^{\pm1}_{2},\ldots,x^{\pm1}_{j-1},x_{j},x'_j,x^{\pm1}_{j+1},\ldots,x^{\pm1}_{n}\big] =\mathop{\mathbb{ZP}}\big[x_2,\widehat{x}_2,\ldots,x_n,\widehat{x}_{n}\big]=\mathop{\rm Im}(f).
\end{gather*}
Thus $z\in\mathop{\rm Im}(f)$ and there exists some $z_1\in\mathop{\mathbb{ZP}}[x_2,x'_2,\ldots,x_n,x'_n]$ such that $z=f(z_1)$. Let~$y_1=z_1(x'_1)^{|a|}$. Then $\mathop{\rm LT}(y_1)=\mathop{\rm LT}(y)$. This completes the proof.
\end{proof}

The following result follows immediately from Theorems~\ref{thmlinind} and~\ref{thmL=U}.
\begin{Corollary}
If the generalized seed $\big(\widetilde{\mathbf{x}},\rho,\widetilde{B}\big)$ is acyclic and coprime, then the standard monomials in $\{x_1,x'_1,\ldots,x_n,x'_n\}$ form a $\mathop{\mathbb{ZP}}$-basis of~$\mathcal{A}\big(\widetilde{\mathbf{x}},\rho,\widetilde{B}\big)$.
\end{Corollary}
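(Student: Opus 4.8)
The plan is to read this off as a formal consequence of the two main results of the section together with the sandwich inclusion $\mathcal{L}\big(\widetilde{\mathbf{x}},\rho,\widetilde{B}\big)\subseteq\mathcal{A}\big(\widetilde{\mathbf{x}},\rho,\widetilde{B}\big)\subseteq\mathcal{U}\big(\widetilde{\mathbf{x}},\rho,\widetilde{B}\big)$ recorded earlier. First I would apply Theorem~\ref{thmL=U}: since the seed $\big(\widetilde{\mathbf{x}},\rho,\widetilde{B}\big)$ is acyclic and coprime, $\mathcal{L}\big(\widetilde{\mathbf{x}},\rho,\widetilde{B}\big)=\mathcal{U}\big(\widetilde{\mathbf{x}},\rho,\widetilde{B}\big)$, and combined with $\mathcal{L}\subseteq\mathcal{A}\subseteq\mathcal{U}$ this forces $\mathcal{L}\big(\widetilde{\mathbf{x}},\rho,\widetilde{B}\big)=\mathcal{A}\big(\widetilde{\mathbf{x}},\rho,\widetilde{B}\big)=\mathcal{U}\big(\widetilde{\mathbf{x}},\rho,\widetilde{B}\big)$. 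So it is enough to show that the standard monomials in $\{x_1,x'_1,\ldots,x_n,x'_n\}$ form a $\mathop{\mathbb{ZP}}$-basis of $\mathcal{L}\big(\widetilde{\mathbf{x}},\rho,\widetilde{B}\big)=\mathop{\mathbb{ZP}}[x_1,x'_1,\ldots,x_n,x'_n]$.

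Linear independence of the standard monomials over $\mathop{\mathbb{ZP}}$ is exactly the content of Theorem~\ref{thmlinind}, which uses precisely the acyclicity hypothesis. For the spanning property I would run the same rewriting procedure already used in this section to pass from $\mathop{\mathbb{ZP}}[x_2,x'_2,\ldots,x_n,x'_n]$ to $\mathop{\mathbb{ZP}}^{\rm st}[x_1,x_2,x'_2,\ldots,x_n,x'_n]$: any monomial in $x_1,x'_1,\ldots,x_n,x'_n$ that is not standard has a factor $x_ix'_i$ for some $i$, which can be replaced using the exchange relation by $P_i\in\mathop{\mathbb{ZP}}[x_1,\ldots,x_{i-1},x_{i+1},\ldots,x_n]$. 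Since $P_i$ contains no primed variables, each such substitution strictly decreases the nonnegative integer $\sum_{i=1}^{n}(\text{exponent of }x'_i)$, so the process terminates and rewrites the original monomial as a $\mathop{\mathbb{ZP}}$-linear combination of standard monomials. Hence the standard monomials span $\mathcal{L}\big(\widetilde{\mathbf{x}},\rho,\widetilde{B}\big)$, and together with their linear independence they form a $\mathop{\mathbb{ZP}}$-basis of $\mathcal{L}\big(\widetilde{\mathbf{x}},\rho,\widetilde{B}\big)=\mathcal{A}\big(\widetilde{\mathbf{x}},\rho,\widetilde{B}\big)$.

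There is no genuine obstacle here; the statement is a packaging of Theorems~\ref{thmlinind} and~\ref{thmL=U}, and the only point needing a line of justification is the termination of the rewriting in the spanning step, handled by the monovariant above. The substantive work has already been done in establishing the coincidence of lower and upper bounds.
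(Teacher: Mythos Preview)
Your proposal is correct and matches the paper's approach exactly: the paper simply states that the corollary follows immediately from Theorems~\ref{thmlinind} and~\ref{thmL=U}, relying implicitly on the sandwich inclusion $\mathcal{L}\subseteq\mathcal{A}\subseteq\mathcal{U}$ and on the rewriting argument (already noted before Lemma~\ref{lemdirectsum}) that expresses arbitrary elements of the lower bound as $\mathop{\mathbb{ZP}}$-combinations of standard monomials. Your explicit termination argument via the total primed degree is a clean way to justify that rewriting step.
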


For convenience, the basis consisting of~the standard monomials is called the standard monomial basis.

\appendix
\section{Generalized upper cluster algebras}\label{appendix}

In this appendix, we prove that every acyclic generalized cluster algebra coincides with its corresponding upper cluster algebra without the assumption of~the existence of~coprimality. We mimic the proof of~the main results given in \cite{muller14}.

\begin{Definition}
Let~$\big(\widetilde{\mathbf{x}},\rho,\widetilde{B}\big)$ be a generalized seed. The generalized upper cluster algebra of~$\mathcal{A}\smash{\big(\widetilde{\mathbf{x}},\rho,\widetilde{B}\big)}$ is defined as
\begin{gather*}
\widetilde{\mathcal{U}}\big(\widetilde{\mathbf{x}},\rho,\widetilde{B}\big):= \bigcap\limits_{\left(\widetilde{\mathbf{y}},\varepsilon,\widetilde{A}\right)\sim \left(\widetilde{\mathbf{x}},\rho,\widetilde{B}\right)} \mathop{\mathbb{ZP}}\!\big[y^{\pm1}_1,\ldots,y^{\pm1}_n\big],
\end{gather*}
where the generalized seed $\big(\widetilde{\mathbf{y}},\varepsilon,\widetilde{A}\big)$ is mutation-equivalent to $\big(\widetilde{\mathbf{x}},\rho,\widetilde{B}\big)$.

\end{Definition}
Obviously, the generalized upper cluster algebra $\widetilde{\mathcal{U}}\big(\widetilde{\mathbf{x}},\rho,\widetilde{B}\big)$ is contained in the corresponding upper bound.

\begin{Lemma}\label{lemA+subsetU+}
Let~$\{x_{i_1},\ldots,x_{i_k}\}\subsetneq \mathbf{x}$. If $\big(\widetilde{\mathbf{x}}^{\dag},\rho^{\dag},\widetilde{B}^{\dag}\big)$ is the freezing of~$\big(\widetilde{\mathbf{x}},\rho,\widetilde{B}\big)$ at the set $\{x_{i_1},\ldots,x_{i_k}\}$, then we have that
\begin{gather*}
\mathcal{A}\big(\widetilde{\mathbf{x}}^{\dag},\rho^{\dag},\widetilde{B}^{\dag}\big) \subseteq \mathcal{A}\big(\widetilde{\mathbf{x}},\rho,\widetilde{B}\big)\big[(x_{i_1}\cdots x_{i_k})^{-1}\big] \subseteq \mathcal{\widetilde{U}}\big(\widetilde{\mathbf{x}},\rho,\widetilde{B}\big)\big[(x_{i_1}\cdots x_{i_k})^{-1}\big] \subseteq \mathcal{\widetilde{U}}\big(\widetilde{\mathbf{x}}^{\dag},\rho^{\dag},\widetilde{B}^{\dag}\big).
\end{gather*}
\end{Lemma}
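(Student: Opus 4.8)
The statement is a chain of three inclusions, and my plan is to establish each one separately, in left-to-right order, so that the intermediate objects $\mathcal{A}\big(\widetilde{\mathbf{x}},\rho,\widetilde{B}\big)\big[(x_{i_1}\cdots x_{i_k})^{-1}\big]$ and $\widetilde{\mathcal{U}}\big(\widetilde{\mathbf{x}},\rho,\widetilde{B}\big)\big[(x_{i_1}\cdots x_{i_k})^{-1}\big]$ carry their obvious meaning as localizations inside $\mathbb{Q}(x_1,\ldots,x_m)$. Since freezing at a set is iterated single-variable freezing in any order (and, by the lemma just before Definition~\ref{def of gca}'s freezing discussion, is compatible with mutation away from the frozen directions), I would first reduce to the case $k=1$: freezing at $\{x_{i_1},\ldots,x_{i_k}\}$ is the composite of freezings, each localization $\big[(x_{i_1}\cdots x_{i_k})^{-1}\big]$ factors as iterated single-element localizations, and the generalized upper cluster algebra behaves well under this process, so the general statement follows from the $k=1$ version by induction on $k$. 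So the real work is the case of freezing at a single cluster variable $x_i$.

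**The first inclusion.** For $\mathcal{A}\big(\widetilde{\mathbf{x}}^{\dag},\rho^{\dag},\widetilde{B}^{\dag}\big)\subseteq\mathcal{A}\big(\widetilde{\mathbf{x}},\rho,\widetilde{B}\big)\big[x_i^{-1}\big]$: the ground ring of the frozen algebra is $\mathop{\mathbb{ZP}}^{\dag}=\mathop{\mathbb{ZP}}[x_i^{\pm1}]$, which sits inside $\mathop{\mathbb{ZP}}[x_i^{-1}]\subseteq\mathcal{A}\big(\widetilde{\mathbf{x}},\rho,\widetilde{B}\big)\big[x_i^{-1}\big]$. The frozen seed $\big(\widetilde{\mathbf{x}}^{\dag},\rho^{\dag},\widetilde{B}^{\dag}\big)$ and all its mutation-equivalent seeds are obtained from seeds mutation-equivalent to $\big(\widetilde{\mathbf{x}},\rho,\widetilde{B}\big)$ (those reachable without ever mutating in direction $i$) simply by moving $x_i$ from the cluster into the coefficient ring and deleting the $i$th column of $\widetilde{B}$; the exchange relations in directions $j\neq i$ are literally unchanged by this bookkeeping. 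Hence every cluster variable of the frozen algebra is already a cluster variable of $\mathcal{A}\big(\widetilde{\mathbf{x}},\rho,\widetilde{B}\big)$, so the generators of $\mathcal{A}\big(\widetilde{\mathbf{x}}^{\dag},\rho^{\dag},\widetilde{B}^{\dag}\big)$ all lie in $\mathcal{A}\big(\widetilde{\mathbf{x}},\rho,\widetilde{B}\big)\big[x_i^{-1}\big]$.

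**The middle and last inclusions.** The middle inclusion is immediate from $\mathcal{A}\big(\widetilde{\mathbf{x}},\rho,\widetilde{B}\big)\subseteq\widetilde{\mathcal{U}}\big(\widetilde{\mathbf{x}},\rho,\widetilde{B}\big)$ (Laurent phenomenon of \cite{CS}) by localizing both sides at $x_i$. The last inclusion, $\widetilde{\mathcal{U}}\big(\widetilde{\mathbf{x}},\rho,\widetilde{B}\big)\big[x_i^{-1}\big]\subseteq\widetilde{\mathcal{U}}\big(\widetilde{\mathbf{x}}^{\dag},\rho^{\dag},\widetilde{B}^{\dag}\big)$, is the heart of the matter and mirrors \cite[Lemma~3.4]{muller14}. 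I would argue: a seed $\big(\widetilde{\mathbf{y}}^{\dag},\varepsilon^{\dag},\widetilde{A}^{\dag}\big)$ of the frozen algebra arises (via the compatibility lemma) as the freezing at $x_i$ of some seed $\big(\widetilde{\mathbf{y}},\varepsilon,\widetilde{A}\big)$ mutation-equivalent to $\big(\widetilde{\mathbf{x}},\rho,\widetilde{B}\big)$ with $y_i=x_i$ still a cluster variable. For such a seed, $\mathop{\mathbb{ZP}}^{\dag}\big[(y^{\dag})^{\pm1}\big]=\mathop{\mathbb{ZP}}\big[y_1^{\pm1},\ldots,y_n^{\pm1}\big]\big[x_i^{-1}\big]$, which already equals $\mathop{\mathbb{ZP}}[y^{\pm1}]$ when $x_i$ is among the $y$'s but in general is a localization of it. So $\widetilde{\mathcal{U}}\big(\widetilde{\mathbf{x}},\rho,\widetilde{B}\big)\big[x_i^{-1}\big]$, being contained in every $\mathop{\mathbb{ZP}}\big[y^{\pm1}\big]$, is contained in every localized ring $\mathop{\mathbb{ZP}}^{\dag}\big[(y^{\dag})^{\pm1}\big]$ over the seeds of the frozen algebra, hence in their intersection, which is $\widetilde{\mathcal{U}}\big(\widetilde{\mathbf{x}}^{\dag},\rho^{\dag},\widetilde{B}^{\dag}\big)$.

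**Main obstacle.** The delicate point — and the one deserving care rather than a one-line dismissal — is the indexing bijection in the last inclusion: I must check that \emph{every} seed of $\mathcal{A}\big(\widetilde{\mathbf{x}}^{\dag},\rho^{\dag},\widetilde{B}^{\dag}\big)$ really does come from a seed of $\mathcal{A}\big(\widetilde{\mathbf{x}},\rho,\widetilde{B}\big)$ in which $x_i$ survives as a cluster variable, so that the intersection defining $\widetilde{\mathcal{U}}^{\dag}$ is over \emph{at least} the image of the intersection defining (the localization of) $\widetilde{\mathcal{U}}$. This is exactly where the ``freezing is compatible with mutation in directions $j\neq i$'' lemma does its work — a mutation sequence for the frozen seed avoids direction $i$ by definition, so it lifts verbatim to a mutation sequence for the unfrozen seed that never touches $x_i$ — and I would spell this lifting out explicitly rather than leave it implicit, since the multinomial (rather than binomial) form of the exchange relations and the string data $\rho$ must be tracked through the correspondence, though none of it actually interacts with the frozen direction.
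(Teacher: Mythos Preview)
Your proposal is correct and follows the same approach as the paper: the first and third inclusions come from the definitions of freezing and of the generalized upper cluster algebra (using, as you do, the compatibility of freezing with mutation in directions $j\neq i$), and the middle inclusion is the Laurent phenomenon. The paper's own proof is a three-sentence sketch that declares the first and third inclusions ``immediate'' from these definitions and cites the Laurent phenomenon for the second; your write-up is simply a fleshed-out version of that same argument, with the reduction to $k=1$ as an optional organizational device and the lifting of mutation sequences made explicit.
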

\begin{proof}
By the definitions of~the freezing of~generalized cluster algebras and the generalized upper cluster algebras, the first and the third inclusions are immediate. By the Laurent phenomenon, i.e., $\mathcal{A}\smash{\big(\widetilde{\mathbf{x}},\rho,\widetilde{B}\big)\subseteq \mathcal{\widetilde{U}}\big(\widetilde{\mathbf{x}},\rho,\widetilde{B}\big)}$, we obtain the second inclusion.
\end{proof}

\begin{Definition}
Let~$\big(\widetilde{\mathbf{x}}^{\dag},\rho^{\dag},\widetilde{B}^{\dag}\big)$ be the freezing of~$\big(\widetilde{\mathbf{x}},\rho,\widetilde{B}\big)$ at $\{x_{i_1},\ldots,x_{i_k}\}$. If
\begin{gather*}
\mathcal{A}\big(\widetilde{\mathbf{x}}^{\dag},\rho^{\dag},\widetilde{B}^{\dag}\big)= \mathcal{A}\big(\widetilde{\mathbf{x}},\rho,\widetilde{B}\big)\big[(x_{i_1}\cdots x_{i_k})^{-1}\big],
\end{gather*}
then we call $\mathcal{A}\big(\widetilde{\mathbf{x}}^{\dag},\rho^{\dag},\widetilde{B}^{\dag}\big)$ a cluster localization of~$\mathcal{A}\big(\widetilde{\mathbf{x}},\rho,\widetilde{B}\big)$.
\end{Definition}
If $\mathcal{A}\big(\widetilde{\mathbf{x}}^\dag,\rho^\dag,\widetilde{B}^\dag\big)$ is a cluster localization of~$\mathcal{A}\big(\widetilde{\mathbf{x}},\rho,\widetilde{B}\big)$ and $\mathcal{A}\big(\widetilde{\mathbf{x}}^\ddag,\rho^\ddag,\widetilde{B}^\ddag\big)$ a cluster localization of~$\smash{\mathcal{A}\big(\widetilde{\mathbf{x}}^\dag,\rho^\dag,\widetilde{B}^\dag\big)}$, then $\smash{\mathcal{A}\big(\widetilde{\mathbf{x}}^\ddag,\rho^\ddag,\widetilde{B}^\ddag\big)}$ is also a cluster localization of~$\smash{\mathcal{A}\big(\widetilde{\mathbf{x}},\rho,\widetilde{B}\big)}$.

\begin{Definition}
Let~$\mathcal{A}\big(\widetilde{\mathbf{x}},\rho,\widetilde{B}\big)$ be a generalized cluster algebra. Let~$\{\mathcal{A}_i\,|\, i\in I\}$ be the set such that $\mathcal{A}_i$ are the cluster localizations of~$\smash{\mathcal{A}\big(\widetilde{\mathbf{x}},\rho,\widetilde{B}\big)}$. For each prime ideal $\mathfrak{p}$ of~$\smash{\mathcal{A}\big(\widetilde{\mathbf{x}},\rho,\widetilde{B}\big)}$, if there exists some $i\in I$ such that $\mathfrak{p}\mathcal{A}_i\subsetneq \mathcal{A}_i$, then the set $\{\mathcal{A}_i\,|\, i\in I\}$ is called a cover of~$\mathcal{A}\smash{\big(\widetilde{\mathbf{x}},\rho,\widetilde{B}\big)}$.
\end{Definition}

\begin{Example}
Let~$(\mathbf{x},\rho,B)$ be the generalized seed from Example \ref{example1}. The freezing of~$\mathcal{A}(\mathbf{x},\rho,B)$ at $x_1$ is $
\mathcal{A}_1:=\mathbb{Z}\big[x^{\pm1}_1,x_2,\frac{x_1+1}{x_2}\big]=\mathcal{A}(\mathbf{x},\rho,B)\big[x^{-1}_1\big],
$ and the freezing at $x_2$ is $
\mathcal{A}_2:=\mathbb{Z}\big[x_1,x^{\pm1}_2,\frac{1+hx_2+x^{2}_2}{x_1}\big]=\mathcal{A}(\mathbf{x},\rho,B)\big[x^{-1}_2\big]$.
Since the ideal $(x_1,x_2)=\mathcal{A}(\mathbf{x},\rho,B)$, we conclude that $\{x_1,x_2\}\nsubseteq\mathfrak{p}$ for any prime ideal $\mathfrak{p}$ of~$\mathcal{A}(\mathbf{x},\rho,B)$. It follows that $\{\mathcal{A}_1,\mathcal{A}_2\}$ is a cover of~$\mathcal{A}(\mathbf{x},\rho,B)$.
\end{Example}

\begin{Lemma}\label{lemcovertransitive}
If $\mathcal{A}\big(\widetilde{\mathbf{x}},\rho,\widetilde{B}\big)$ has a cover $\{\mathcal{A}_i\,|\, i\in I\}$ and each $\mathcal{A}_i$ has a cover $\{\mathcal{A}_{ij}\,|\, j\in J_i\}$, then $\{\mathcal{A}_{ij}\,|\, i\in I,j\in J_i\}$ is a cover of~$\mathcal{A}\smash{\big(\widetilde{\mathbf{x}},\rho,\widetilde{B}\big)}$. Namely, the covers are transitive.
\end{Lemma}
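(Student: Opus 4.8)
The plan is to prove transitivity of covers directly from the definitions, reducing everything to a statement about prime ideals surviving in cluster localizations. First I would recall what needs to be shown: given a prime ideal $\mathfrak{p}$ of $\mathcal{A}\big(\widetilde{\mathbf{x}},\rho,\widetilde{B}\big)$, we must produce a pair $(i,j)$ with $i\in I$ and $j\in J_i$ such that $\mathfrak{p}\mathcal{A}_{ij}\subsetneq\mathcal{A}_{ij}$. Since $\{\mathcal{A}_i\mid i\in I\}$ is a cover, there is some $i\in I$ with $\mathfrak{p}\mathcal{A}_i\subsetneq\mathcal{A}_i$. A basic fact from commutative algebra is that if $\mathfrak{p}$ is prime and $\mathfrak{p}A_i\subsetneq A_i$, then $\mathfrak{q}:=\mathfrak{p}\mathcal{A}_i$ (or rather its radical, or a prime containing it) is a proper ideal, and in fact one can take a prime ideal $\mathfrak{q}$ of $\mathcal{A}_i$ lying over $\mathfrak{p}$ — because $\mathcal{A}_i$ is a localization of $\mathcal{A}\big(\widetilde{\mathbf{x}},\rho,\widetilde{B}\big)$ (a cluster localization is an honest ring localization at a single element $x_{i_1}\cdots x_{i_k}$), and primes of a localization $A[s^{-1}]$ correspond exactly to primes of $A$ not meeting $\{s^n\}$; the condition $\mathfrak{p}A[s^{-1}]\subsetneq A[s^{-1}]$ is precisely $s\notin\mathfrak{p}$, so $\mathfrak{p}A[s^{-1}]$ is itself prime.

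Next I would apply the cover hypothesis for $\mathcal{A}_i$ to the prime $\mathfrak{q}=\mathfrak{p}\mathcal{A}_i$ of $\mathcal{A}_i$: there exists $j\in J_i$ with $\mathfrak{q}\mathcal{A}_{ij}\subsetneq\mathcal{A}_{ij}$. Now $\mathcal{A}_{ij}$ is a cluster localization of $\mathcal{A}_i$, hence (by the transitivity of cluster localizations already noted in the excerpt, right after the definition of cluster localization) it is a cluster localization of $\mathcal{A}\big(\widetilde{\mathbf{x}},\rho,\widetilde{B}\big)$, so it legitimately belongs to the family indexed by $I\times\bigsqcup_i J_i$. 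It then remains to check $\mathfrak{p}\mathcal{A}_{ij}\subsetneq\mathcal{A}_{ij}$. This follows because $\mathfrak{p}\mathcal{A}_{ij}=(\mathfrak{p}\mathcal{A}_i)\mathcal{A}_{ij}=\mathfrak{q}\mathcal{A}_{ij}\subsetneq\mathcal{A}_{ij}$, using that extension of ideals along the composite $\mathcal{A}\big(\widetilde{\mathbf{x}},\rho,\widetilde{B}\big)\to\mathcal{A}_i\to\mathcal{A}_{ij}$ is the same as extending in two stages, i.e., $(\mathfrak{p}\mathcal{A}_i)\mathcal{A}_{ij}=\mathfrak{p}\mathcal{A}_{ij}$. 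That identity is a standard property of extension of ideals under a composition of ring homomorphisms. Thus the pair $(i,j)$ witnesses that $\mathfrak{p}$ survives in $\mathcal{A}_{ij}$, and since $\mathfrak{p}$ was an arbitrary prime, $\{\mathcal{A}_{ij}\mid i\in I,\ j\in J_i\}$ is a cover of $\mathcal{A}\big(\widetilde{\mathbf{x}},\rho,\widetilde{B}\big)$.

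I do not expect any serious obstacle here; the argument is essentially formal once one observes that cluster localizations are ordinary localizations at a monomial in the frozen-ing variables, so the behaviour of prime ideals is controlled by classical commutative algebra. The one point requiring a small amount of care is the claim that $\mathfrak{q}=\mathfrak{p}\mathcal{A}_i$ is again prime (and, implicitly, that it is the contraction-free "same" prime), which is where the localization structure is genuinely used rather than an abstract surjectivity or flatness statement; without it the phrase "for each prime ideal $\mathfrak{p}$" in the definition of cover could not be chained. Everything else — the two-stage extension identity and transitivity of cluster localizations — is either standard or already recorded in the excerpt, so the write-up will be short.
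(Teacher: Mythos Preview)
Your argument is correct and follows the same overall skeleton as the paper's proof: start with a prime $\mathfrak{p}$, pass to some $\mathcal{A}_i$ where it survives, then use the cover of $\mathcal{A}_i$ to push further to some $\mathcal{A}_{ij}$, and conclude via the two-stage extension identity $\mathfrak{p}\mathcal{A}_{ij}=(\mathfrak{p}\mathcal{A}_i)\mathcal{A}_{ij}$.

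The one point where you diverge from the paper is in how you produce a prime of $\mathcal{A}_i$ to feed into the second cover. You exploit the fact that a cluster localization is an honest localization $\mathcal{A}[s^{-1}]$, so that $\mathfrak{p}\mathcal{A}_i$ is already prime. The paper instead simply takes any maximal ideal $\mathfrak{m}\supseteq\mathfrak{p}\mathcal{A}_i$ and applies the cover to $\mathfrak{m}$, giving $\mathfrak{p}\mathcal{A}_{ij}\subseteq\mathfrak{m}\mathcal{A}_{ij}\subsetneq\mathcal{A}_{ij}$. The paper's route is marginally more economical in that it needs nothing about the ring map $\mathcal{A}\to\mathcal{A}_i$ beyond the existence of maximal ideals, whereas your route genuinely uses the localization structure; on the other hand, your observation that $\mathfrak{p}\mathcal{A}_i$ is itself prime makes the chaining completely transparent. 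Either way the proof is a few lines.
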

\begin{proof}
Let~$\mathfrak{p}$ be a prime ideal of~$\mathcal{A}\big(\widetilde{\mathbf{x}},\rho,\widetilde{B}\big)$, and suppose there exists some $\mathcal{A}_i$ such that \mbox{$\mathfrak{p}\mathcal{A}_i\subsetneq \mathcal{A}_i$}. Since $\mathfrak{p}\mathcal{A}_i$ is a proper ideal of~$\mathcal{A}_i$, there exists a maximal ideal $\mathfrak{m}$ in $\mathcal{A}_i$ such that $\mathfrak{p}\mathcal{A}_i\subseteq \mathfrak{m}$. There exists some $A_{ij}$ such that $\mathfrak{m}\mathcal{A}_{ij}\subsetneq\mathcal{A}_{ij}$. Therefore we have that $\mathfrak{p}\mathcal{A}_{ij} \subseteq$ $\mathfrak{m}\mathcal{A}_{ij}\subsetneq\mathcal{A}_{ij}$. This completes the proof.
\end{proof}

\begin{Lemma}\label{lemcover}
Let~$\{\mathcal{A}_i\,|\, i\in I\}$ be a cover of~the generalized cluster algebra $\mathcal{A}\big(\widetilde{\mathbf{x}},\rho,\widetilde{B}\big)$. Let~$\mathcal{\widetilde{U}}_i$ denote the generalized upper cluster algebra of~$\mathcal{A}_i$ for each $i\in I$. We have
\begin{enumerate}\itemsep=0pt
 \item[{\rm (1)}]
$\mathcal{A}\big(\widetilde{\mathbf{x}},\rho,\widetilde{B}\big)=\bigcap\limits_{i\in I}\mathcal{A}_i;$

 \item[{\rm (2)}] if $\mathcal{A}_i=\mathcal{\widetilde{U}}_i$ for all $i\in I$, then $\mathcal{A}\big(\widetilde{\mathbf{x}},\rho,\widetilde{B}\big) =\mathcal{\widetilde{U}}\big(\widetilde{\mathbf{x}},\rho,\widetilde{B}\big)$.
\end{enumerate}
\end{Lemma}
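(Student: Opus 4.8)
The plan is to prove both parts by a standard "cover = intersection" argument, reducing everything to the fact that a covering family of localizations detects membership. First I would prove part (1). The inclusion $\mathcal{A}(\widetilde{\mathbf{x}},\rho,\widetilde{B}) \subseteq \bigcap_{i\in I}\mathcal{A}_i$ is immediate since each $\mathcal{A}_i$ is a localization $\mathcal{A}(\widetilde{\mathbf{x}},\rho,\widetilde{B})[(x_{i_1}\cdots x_{i_k})^{-1}]$ and hence contains $\mathcal{A}(\widetilde{\mathbf{x}},\rho,\widetilde{B})$. For the reverse inclusion, suppose $y \in \bigcap_{i\in I}\mathcal{A}_i$. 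The key step is to consider the ideal $\mathfrak{a} = \{ a \in \mathcal{A}(\widetilde{\mathbf{x}},\rho,\widetilde{B}) \mid a y \in \mathcal{A}(\widetilde{\mathbf{x}},\rho,\widetilde{B}) \}$, the "ideal of denominators" of $y$. If $\mathfrak{a}$ were a proper ideal it would be contained in some prime ideal $\mathfrak{p}$; by the covering hypothesis there is an $i\in I$ with $\mathfrak{p}\mathcal{A}_i \subsetneq \mathcal{A}_i$. But $y \in \mathcal{A}_i = \mathcal{A}(\widetilde{\mathbf{x}},\rho,\widetilde{B})[(x_{i_1}\cdots x_{i_k})^{-1}]$ means $(x_{i_1}\cdots x_{i_k})^N y \in \mathcal{A}(\widetilde{\mathbf{x}},\rho,\widetilde{B})$ for some $N$, so $(x_{i_1}\cdots x_{i_k})^N \in \mathfrak{a} \subseteq \mathfrak{p}$; since $\mathfrak{p}$ is prime, $x_{i_j} \in \mathfrak{p}$ for some $j$, which forces $\mathfrak{p}\mathcal{A}_i = \mathcal{A}_i$ because $x_{i_j}$ is a unit in $\mathcal{A}_i$ — a contradiction. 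Hence $\mathfrak{a} = \mathcal{A}(\widetilde{\mathbf{x}},\rho,\widetilde{B})$, so $1 \cdot y \in \mathcal{A}(\widetilde{\mathbf{x}},\rho,\widetilde{B})$, giving $y \in \mathcal{A}(\widetilde{\mathbf{x}},\rho,\widetilde{B})$.

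For part (2), assume $\mathcal{A}_i = \widetilde{\mathcal{U}}_i$ for every $i\in I$. By Lemma \ref{lemA+subsetU+} we already have $\mathcal{A}(\widetilde{\mathbf{x}},\rho,\widetilde{B}) \subseteq \widetilde{\mathcal{U}}(\widetilde{\mathbf{x}},\rho,\widetilde{B})$, so it suffices to prove the reverse inclusion. Take $y \in \widetilde{\mathcal{U}}(\widetilde{\mathbf{x}},\rho,\widetilde{B})$. I would argue that $\widetilde{\mathcal{U}}(\widetilde{\mathbf{x}},\rho,\widetilde{B})[(x_{i_1}\cdots x_{i_k})^{-1}] \subseteq \widetilde{\mathcal{U}}_i$ for each $i$: every cluster of $\mathcal{A}_i$ arises from a cluster of $\mathcal{A}(\widetilde{\mathbf{x}},\rho,\widetilde{B})$ (mutating in the non-frozen directions) with the frozen variables $x_{i_1},\ldots,x_{i_k}$ adjoined as invertible, so the defining intersection for $\widetilde{\mathcal{U}}_i$ is over a subset of the Laurent rings (localized at the $x_{i_j}$) that appear in the intersection defining $\widetilde{\mathcal{U}}(\widetilde{\mathbf{x}},\rho,\widetilde{B})$; this is exactly the chain of inclusions already recorded in Lemma \ref{lemA+subsetU+}. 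Hence $y \in \widetilde{\mathcal{U}}_i = \mathcal{A}_i$ for every $i \in I$, and by part (1) we conclude $y \in \bigcap_{i\in I}\mathcal{A}_i = \mathcal{A}(\widetilde{\mathbf{x}},\rho,\widetilde{B})$.

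The main obstacle I anticipate is part (1), specifically making the "ideal of denominators" argument airtight: one must be careful that the family $\{\mathcal{A}_i\}$ consists of \emph{cluster localizations} in the precise sense defined — localizations at products of frozen variables of $\mathcal{A}(\widetilde{\mathbf{x}},\rho,\widetilde{B})$ — so that membership in $\mathcal{A}_i$ genuinely means clearing denominators by a power of $x_{i_1}\cdots x_{i_k}$, and that these denominators lie in the algebra and generate the unit ideal at the relevant prime by the cover condition. The rest is formal: part (2) then follows by combining part (1) with Lemma \ref{lemA+subsetU+}.
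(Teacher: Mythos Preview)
Your argument is correct and is precisely the standard ``ideal of denominators'' argument from \cite{muller14} that the paper defers to in its omitted proof: part~(1) via the conductor ideal and the cover condition, and part~(2) by combining part~(1) with the inclusion chain of Lemma~\ref{lemA+subsetU+}. One minor terminological slip: in your final paragraph the $x_{i_j}$ are cluster variables of the original seed (which become frozen after freezing), not frozen variables of $\mathcal{A}\big(\widetilde{\mathbf{x}},\rho,\widetilde{B}\big)$ itself, but this does not affect the mathematics.
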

\begin{proof}
The proof of~the lemma is quite similar to the one used in \cite[Proposition~2, Lemma~2]{muller14}, so is omitted.
\end{proof}

A~generalized cluster algebra $\mathcal{A}\big(\widetilde{\mathbf{x}},\rho,\widetilde{B}\big)$ is called isolated if the principal part $B=0$.

\begin{Proposition}\label{propisolated}
If $\mathcal{A}\big(\widetilde{\mathbf{x}},\rho,\widetilde{B}\big)$ is an~isolated generalized cluster algebra, then we have that
$\smash{\mathcal{A}\big(\widetilde{\mathbf{x}},\rho,\widetilde{B}\big)
=\mathcal{\widetilde{U}}\big(\widetilde{\mathbf{x}},\rho,\widetilde{B}\big)}$.
\end{Proposition}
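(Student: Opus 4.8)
The plan is to make both $\mathcal{A}:=\mathcal{A}\big(\widetilde{\mathbf{x}},\rho,\widetilde{B}\big)$ and $\widetilde{\mathcal{U}}:=\widetilde{\mathcal{U}}\big(\widetilde{\mathbf{x}},\rho,\widetilde{B}\big)$ explicit as $\mathop{\mathbb{ZP}}$-submodules of $L_\emptyset:=\mathop{\mathbb{ZP}}\big[x_1^{\pm1},\ldots,x_n^{\pm1}\big]$ and compare them. The first step is to extract the structural consequences of $B=0$: in the matrix mutation formula the correction term $[b_{il}]_+b_{ki}+b_{il}[-b_{ki}]_+$ vanishes (as $b_{il}$ is a principal-part entry), so $\mu_i$ just negates the $i$th column of $\widetilde{B}$ and reverses $\rho_i$; in particular every mutation-equivalent matrix still has zero principal part, and since $\beta_{ki}=0$ for $k\in[1,n]$ the exchange relation collapses to $x_i'=x_i^{-1}P_i$ with $P_i\in\mathop{\mathbb{ZP}}$ (it only involves frozen variables). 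Using that $\mu_i$ is an involution and that $\mu_i$ leaves the column and the string of every direction $j\ne i$ unchanged, an induction on the length of a mutation sequence identifies the mutation class of $\big(\widetilde{\mathbf{x}},\rho,\widetilde{B}\big)$ with the seeds $\Sigma_S$ ($S\subseteq[1,n]$) whose cluster is $\{x_i\mid i\notin S\}\cup\{x_j'\mid j\in S\}$. Hence $\mathcal{A}=\mathop{\mathbb{ZP}}[x_1,x_1',\ldots,x_n,x_n']$ and $\widetilde{\mathcal{U}}=\bigcap_{S\subseteq[1,n]}L_S$, where $L_S:=\mathop{\mathbb{ZP}}\big[\{x_i^{\pm1}\}_{i\notin S}\cup\{(x_j')^{\pm1}\}_{j\in S}\big]$.

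Next I would describe these rings. Rewriting monomials in the generators of $\mathcal{A}$ via $x_ix_i'=P_i$ and invoking the $\mathop{\mathbb{ZP}}$-independence of standard monomials (Theorem~\ref{thmlinind}, applicable because an isolated seed is acyclic), together with $\mathbf{x}^{(\mathbf{a})}=\mathbf{x}^{\mathbf{a}}\prod_{i:\,a_i<0}P_i^{|a_i|}$, yields $\mathcal{A}=\bigoplus_{\mathbf{a}\in\mathbb{Z}^n}\big(\prod_{i:\,a_i<0}P_i^{|a_i|}\big)\mathop{\mathbb{ZP}}\cdot\mathbf{x}^{\mathbf{a}}$ inside $L_\emptyset$. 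Likewise, since the cluster of each $\Sigma_S$ is a transcendence basis over $\mathrm{Frac}(\mathop{\mathbb{ZP}})$, comparing coefficients of the Laurent monomials $\mathbf{x}^{\mathbf{a}}$ shows that $y=\sum_{\mathbf{a}}c_{\mathbf{a}}\mathbf{x}^{\mathbf{a}}\in L_\emptyset$ lies in $L_S$ precisely when, for every $\mathbf{a}$, the element $\prod_{j\in S,\,a_j<0}P_j^{|a_j|}$ divides $c_{\mathbf{a}}\prod_{j\in S,\,a_j\ge0}P_j^{a_j}$ in $\mathop{\mathbb{ZP}}$.

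With these in hand the inclusion $\mathcal{A}\subseteq\widetilde{\mathcal{U}}$ is the Laurent phenomenon, and for the converse I would take $y=\sum_{\mathbf{a}}c_{\mathbf{a}}\mathbf{x}^{\mathbf{a}}\in\widetilde{\mathcal{U}}\subseteq L_\emptyset$ and, for each $\mathbf{a}$ separately, apply the criterion above to the single seed $\Sigma_{S(\mathbf{a})}$ with $S(\mathbf{a}):=\{i\mid a_i<0\}$; for this $S$ the factor $\prod_{j\in S,\,a_j\ge0}P_j^{a_j}$ is empty, so $y\in L_{S(\mathbf{a})}$ forces $\prod_{i:\,a_i<0}P_i^{|a_i|}\mid c_{\mathbf{a}}$, whence $y\in\mathcal{A}$ by the description of $\mathcal{A}$.

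The crux — and the place where coprimality would otherwise be needed — is precisely this last step: using only the rings $L_{\{k\}}$ (equivalently, the upper bound) would give $P_k^{|a_k|}\mid c_{\mathbf{a}}$ for each $k$ separately, which does not imply $\prod_{i:\,a_i<0}P_i^{|a_i|}\mid c_{\mathbf{a}}$ when the $P_i$ share a factor; the whole mutation class, available in $\widetilde{\mathcal{U}}$, supplies exactly the seed $\Sigma_{S(\mathbf{a})}$ that does. The remaining work — verifying that the mutation class is $\{\Sigma_S\}$ and justifying the two module descriptions by coefficient comparison — is routine but requires some care with signs and exponents.
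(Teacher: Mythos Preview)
Your proposal is correct and follows essentially the same approach as the paper, which simply refers to \cite[Proposition~3]{muller14}: identify the mutation class with the $2^n$ seeds $\Sigma_S$, describe $\mathcal{A}$ via standard monomials, and for each Laurent monomial $\mathbf{x}^{\mathbf{a}}$ in an element of $\widetilde{\mathcal{U}}$ use the specific seed $\Sigma_{S(\mathbf{a})}$ to obtain the needed divisibility $\prod_{a_i<0}P_i^{|a_i|}\mid c_{\mathbf{a}}$. Your observation that this last step genuinely requires the full upper cluster algebra rather than just the upper bound (since without coprimality the individual conditions $P_k^{|a_k|}\mid c_{\mathbf{a}}$ do not combine) is exactly the point of the proposition.
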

\begin{proof}
Proving the proposition uses the same ideas in \cite[Proposition~3]{muller14}.
\end{proof}

\begin{Proposition}\label{propcover}
If the generalized cluster algebra $\mathcal{A}\big(\widetilde{\mathbf{x}},\rho,\widetilde{B}\big)$ is acyclic, then it has a cover $\{\mathcal{A}_i\,|\, i\in I\}$ with each $\mathcal{A}_i$ being isolated.
\end{Proposition}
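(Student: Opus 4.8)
The plan is to mimic the argument for ordinary cluster algebras given in \cite[Theorem~1, Lemma~6]{muller14}, adapting it to the multinomial exchange relations. Since $\big(\widetilde{\mathbf{x}},\rho,\widetilde{B}\big)$ is acyclic, after renumbering we may assume $b_{lk}\geq 0$ for $1\leq k<l\leq n$. The key point is that for an acyclic seed, freezing a suitable collection of cluster variables produces a seed whose exchange matrix has strictly fewer arrows, and by iterating we reach an isolated seed; what must be checked is that each such freezing is in fact a \emph{cluster localization}, so that Lemma~\ref{lemA+subsetU+} gives equality rather than mere inclusion, and that the resulting family is a \emph{cover}.

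First I would show that for the source vertex $1$ (which exists by acyclicity — it has no incoming arrows since $b_{1k}\leq 0$ would force, after our normalization, $b_{1k}=0$ for... more carefully, pick the vertex that is a sink or source of $\Gamma$), freezing at $x_1$ yields a cluster localization: one must verify $\mathcal{A}\big(\widetilde{\mathbf{x}}^{\dag},\rho^{\dag},\widetilde{B}^{\dag}\big)=\mathcal{A}\big(\widetilde{\mathbf{x}},\rho,\widetilde{B}\big)\big[x_1^{-1}\big]$. The inclusion ``$\subseteq$'' is Lemma~\ref{lemA+subsetU+}; for ``$\supseteq$'' one checks that every cluster variable obtained by mutating $\big(\widetilde{\mathbf{x}},\rho,\widetilde{B}\big)$ in directions other than $1$ already lies in $\mathcal{A}\big(\widetilde{\mathbf{x}}^{\dag},\rho^{\dag},\widetilde{B}^{\dag}\big)$, and that $x_1'$ (the one missing generator) lies in the localization $\mathcal{A}\big(\widetilde{\mathbf{x}},\rho,\widetilde{B}\big)\big[x_1^{-1}\big]$ via the exchange relation $x_1' = x_1^{-1}P_1$ with $P_1\in\mathop{\mathbb{ZP}}[x_2,\ldots,x_n]$. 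Then I would show $\{\mathcal{A}^{\dag},\mathcal{A}''\}$ — the freezings at $x_1$ and at $x_1'$ respectively — form a cover: since $x_1 x_1' = P_1$ and the constant term of $P_1$ (the $r=0$ term) is a unit in $\mathop{\mathbb{ZP}}$, the ideal generated by $x_1$ and $x_1'$ is the whole algebra, hence no prime ideal contains both, which is exactly the cover condition. This is the rank-one reduction step.

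Next, I would proceed by induction on the number of non-frozen vertices (or on the number of arrows in $\Gamma$). Given the cover $\{\mathcal{A}^{\dag},\mathcal{A}''\}$ of $\mathcal{A}\big(\widetilde{\mathbf{x}},\rho,\widetilde{B}\big)$, each member is a generalized cluster algebra of smaller rank; I must check it is again acyclic. For $\mathcal{A}^{\dag}$ this is clear since deleting the column of a source/sink only deletes arrows. For $\mathcal{A}''$ one uses that after mutating at the source vertex $1$ the seed becomes acyclic with $1$ now a sink (reversing all arrows at $1$), and freezing at the sink again deletes arrows; so $\mathcal{A}''$ is acyclic of smaller rank too. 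By the induction hypothesis each of $\mathcal{A}^{\dag}$ and $\mathcal{A}''$ has a cover by isolated generalized cluster algebras, and by Lemma~\ref{lemcovertransitive} the union of these covers is a cover of $\mathcal{A}\big(\widetilde{\mathbf{x}},\rho,\widetilde{B}\big)$ by isolated algebras. The base case is when $B=0$, which is already isolated.

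The main obstacle I expect is the cluster-localization step: verifying that freezing at a source (or sink) of an acyclic generalized seed is genuinely a cluster localization, i.e., that $x_1'\in\mathcal{A}\big(\widetilde{\mathbf{x}},\rho,\widetilde{B}\big)\big[x_1^{-1}\big]$ and, more delicately, that \emph{all} cluster variables of the original algebra are recovered from the frozen algebra together with $x_1^{\pm 1}$. In the binomial case this is handled by a careful bookkeeping of which mutation sequences avoid direction $1$; with multinomial exchange relations the $P_i$ have more terms, but the crucial structural fact — that the string coefficients $\rho_{i,0}=\rho_{i,d_i}=1$ and the exchange polynomial's extreme monomials behave exactly as in the ordinary case — means the same bookkeeping goes through. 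One must also confirm that the constant term of $P_1$ being a unit in $\mathop{\mathbb{ZP}}$ (which holds because $\rho_{1,0}=1$ and the exponent vector of the $r=0$ monomial involves only frozen variables when $1$ is a source) suffices for the cover condition; this is the one place the generalized setting needs a sentence of justification beyond a verbatim citation of \cite{muller14}.
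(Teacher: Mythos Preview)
Your cover argument has a genuine gap. You claim that because $P_1=x_1x'_1$ has one monomial that is a unit in $\mathop{\mathbb{ZP}}$, the ideal $(x_1,x'_1)$ equals $\mathcal{A}$; but the remaining terms of $P_1$ involve the other cluster variables $x_2,\ldots,x_n$, which need not lie in $(x_1,x'_1)$, so you cannot isolate the unit. Concretely, in the ordinary $A_2$ cluster algebra (five cluster variables $x_1,\ldots,x_5$ with $x_1x_3=1+x_2$, etc.) the assignment $x_1=x_3=0$, $x_2=x_4=x_5=-1$ defines a ring homomorphism $\mathcal{A}\to\mathbb{Z}$ whose kernel is a prime ideal containing both $x_1$ and $x'_1=x_3$. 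So $\{\mathcal{A}[x_1^{-1}],\mathcal{A}[(x'_1)^{-1}]\}$ is \emph{not} a cover, and your induction never gets started.

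The paper (following \cite[Proposition~4]{muller14}) uses a different pair: a source $l$ together with a vertex $k$ joined to it by an arrow, i.e., with $b_{kl}<0$. Because $l$ is a source, every term of $P_l$ with $r<d_l$ carries the factor $x_k^{(d_l-r)(-\beta_{kl})}$, so
\[
x_lx'_l \;=\; \prod_{i=n+1}^{m}x_i^{[b_{il}]_+}\;+\;x_k\cdot(\text{element of }\mathcal{A}),
\]
and hence the unit $\prod_{i>n}x_i^{[b_{il}]_+}$ lies in $(x_k,x_l)$, giving $(x_k,x_l)=\mathcal{A}$. One then freezes at $x_k$ and at $x_l$; each freezing is acyclic with strictly fewer arrows, so the induction is on the number of arrows rather than on the rank. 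Finally, you do not need to verify cluster localization by hand as you propose: once the inductive hypothesis gives $\mathcal{A}^{\dag}=\widetilde{\mathcal{U}}^{\dag}$ for the frozen seed, the chain of inclusions in Lemma~\ref{lemA+subsetU+} collapses to equalities and the localization property comes for free.
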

\begin{proof}
We can assume that $b_{ij}\geq0$ for $i,j\in[1,n]$ with $i>j$.
There exist $k,l\in[1,n]$ such that~$l$ is a source in $\smash{\Gamma\big(\widetilde{\mathbf{x}},\rho,\widetilde{B}\big)}$ and $b_{kl}<0$. Recall the exchange relation
\begin{gather*}
x_lx'_l=\sum\limits_{r=0}^{d_l-1}\rho_{l,r}\prod\limits_{i=1}^{m}x_i^{r[\beta_{il}]_+ +(d_l-r)[-\beta_{il}]_+} +\prod\limits_{i=n+1}^{m}x_i^{[b_{il}]_+}.
\end{gather*}
It follows that
\begin{gather*}
1=x_lx'_l\prod\limits_{i=n+1}^{m}x_i^{-[b_{il}]_+} -\sum\limits_{r=0}^{d_l-1}\rho_{l,r}\prod\limits_{i=1}^{m}x_i^{r[\beta_{il}]_+ +(d_l-r)[-\beta_{il}]_+} \prod\limits_{i=n+1}^{m}x_i^{-[b_{il}]_+}.
\end{gather*}
Since $\beta_{kl}<0$, the variables $x_k$ and $x_l$ appear in the right hand side of~the above equation. Thus the ideal $(x_k,x_l)$ is equal to $\mathcal{A}\smash{\big(\widetilde{\mathbf{x}},\rho,\widetilde{B}\big)}$. The remainder of~the proof follows the one in~\mbox{\cite[Proposition~4]{muller14}}.
\end{proof}

\begin{Corollary}
If the generalized cluster algebra $\mathcal{A}\big(\widetilde{\mathbf{x}},\rho,\widetilde{B}\big)$ is acyclic, then we have that
$
\mathcal{A}\big(\widetilde{\mathbf{x}},\rho,\widetilde{B}\big)=\mathcal{\widetilde{U}}\big(\widetilde{\mathbf{x}},\rho,\widetilde{B}\big).
$
\end{Corollary}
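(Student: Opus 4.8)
The plan is to assemble the three preceding results of this appendix into a short chain of implications. First I would apply Proposition~\ref{propcover}: since $\mathcal{A}\big(\widetilde{\mathbf{x}},\rho,\widetilde{B}\big)$ is acyclic, it admits a cover $\{\mathcal{A}_i \mid i\in I\}$ in which every member $\mathcal{A}_i$ is an isolated generalized cluster algebra, each obtained from the initial generalized seed by an iterated freezing (the compositions of freezings being assembled via the transitivity of cluster localizations and of covers, Lemma~\ref{lemcovertransitive}).

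Next, for each $i\in I$, the algebra $\mathcal{A}_i$ is isolated, so Proposition~\ref{propisolated} gives $\mathcal{A}_i=\widetilde{\mathcal{U}}_i$, where $\widetilde{\mathcal{U}}_i$ denotes the generalized upper cluster algebra of $\mathcal{A}_i$. Finally, feeding this into Lemma~\ref{lemcover}(2) — which says that if $\{\mathcal{A}_i\}$ is a cover of $\mathcal{A}\big(\widetilde{\mathbf{x}},\rho,\widetilde{B}\big)$ and $\mathcal{A}_i=\widetilde{\mathcal{U}}_i$ for every $i$, then $\mathcal{A}\big(\widetilde{\mathbf{x}},\rho,\widetilde{B}\big)=\widetilde{\mathcal{U}}\big(\widetilde{\mathbf{x}},\rho,\widetilde{B}\big)$ — yields the claim at once.

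So the corollary itself is purely formal given what precedes it; the substantive work has been front-loaded into Propositions~\ref{propisolated} and~\ref{propcover}. If one asks where the real obstacle lies, it is Proposition~\ref{propcover}: one must show that an acyclic generalized seed can be trivialized by freezing, by picking a source $x_l$ of $\Gamma\big(\widetilde{\mathbf{x}},\rho,\widetilde{B}\big)$ together with a vertex $x_k$ with $b_{kl}<0$, deducing from the rewritten exchange relation that the ideal $(x_k,x_l)$ is the whole algebra, concluding that the two localizations $\mathcal{A}\big[x_k^{-1}\big]$ and $\mathcal{A}\big[x_l^{-1}\big]$ form a cover, and iterating. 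The point requiring care, relative to the ordinary cluster-algebra case, is that the generalized exchange relation $x_lx'_l=\sum_{r=0}^{d_l-1}\rho_{l,r}\prod_{i}x_i^{r[\beta_{il}]_+ +(d_l-r)[-\beta_{il}]_+}+\prod_{i>n}x_i^{[b_{il}]_+}$ is multinomial rather than binomial, so one must check that both $x_k$ and $x_l$ genuinely survive after clearing the frozen factor; this is exactly where $\beta_{kl}<0$ and the source condition on $l$ are used.
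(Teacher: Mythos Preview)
Your argument is correct and matches the paper's intended route exactly: the corollary is stated without proof precisely because it is the immediate concatenation of Proposition~\ref{propcover}, Proposition~\ref{propisolated}, and Lemma~\ref{lemcover}(2), just as you describe. Your additional commentary on where the substantive work lies (in Proposition~\ref{propcover}, via the ideal $(x_k,x_l)$ being the whole algebra) is also accurate and mirrors the paper's own proof of that proposition.
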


\subsection*{Acknowledgements}

The authors are greatly indebted to referees for their valuable comments and recommendations which definitely help to improve the readability and quality of~the paper. Liqian Bai was supported by NSF of~China (No.~11801445), the Natural Science Foundation of~Shaanxi Province (No.~2020JQ-116) and the Fundamental Research Funds for the Central Universities (No.~310201911cx027), Ming Ding was supported by NSF of~China (No.~11771217) and Fan Xu was supported by NSF of~China (No. 11471177).

\pdfbookmark[1]{References}{ref}
\LastPageEnding

\end{document}